\renewcommand{\comment}[1]{\marginpar{{\tiny{#1}\normalfont\par}}}
\newcommand\ba{\begin{align*}}
\newcommand\ea{\end{align*}}
\newcommand\be{\begin{enumerate}}
\newcommand\ee{\end{enumerate}}
\newcommand\bp{\begin{proof}}
\newcommand\ep{\end{proof}}
\newcommand\bpp{\begin{prop}}
\newcommand\epp{\end{prop}}
\newcommand\bpb{\begin{prob}}
\newcommand\epb{\end{prob}}
\newcommand\bd{\begin{defn}}
\newcommand\ed{\end{defn}}
\newcommand\bh{\begin{hint}}
\newcommand\eh{\end{hint}}
\newcommand\sgn{\mathrm{sgn}}
\newcommand\bN{\mathbb{N}}
\newcommand\N{\mathbb{N}}
\newcommand\bR{\mathbb{R}}
\newcommand\R{\mathbb{R}}
\newcommand\bZ{\mathbb{Z}}
\newcommand\Z{\mathbb{Z}}
\newcommand\Hom{\operatorname{Hom}}
\newcommand\supp{\operatorname{supp}}
\DeclareMathOperator\Homeo{Homeo}
\newcommand\sse{\subseteq}
\newcommand\co{\colon}
\DeclareMathOperator\Fix{Fix}
\DeclareMathOperator\Diff{Diff}
\def\thetitle{{Subexponential growth and $C^1$ actions on one-manifolds}}
\def\theauthors{{Sang-hyun Kim, Nicolás Matte Bon, Mikael de la Salle, Michele Triestino}}
\theoremstyle{plain}
\newtheorem{thm}{Theorem}[section]
\newtheorem{lem}[thm]{Lemma}
\newtheorem{cor}[thm]{Corollary}
\newtheorem{prop}[thm]{Proposition}
\newtheorem{que}[thm]{Question}
\newtheorem*{principle*}{Principle}
\newtheorem*{claim*}{Claim}
\theoremstyle{remark}
\newtheorem{exmp}[thm]{Example}
\newtheorem{rem}[thm]{Remark}
\theoremstyle{definition}
\newtheorem{defn}[thm]{Definition}
\newtheorem{prob}{Problem}[section]
\begin{document}
\title[Subexponential growth and $C^1$ actions on one-manifolds]\thetitle
\date{}
\keywords{orbit growth, smoothability, one-dimensional group actions}
\subjclass[2020]{Primary: 57M60; Secondary: 37C35, 37C85.}


\author[Kim S.-h.]{Sang-hyun Kim}
\address{School of Mathematics, Korea Institute for Advanced Study (KIAS), Seoul, Korea}
\email{kimsh@kias.re.kr}
\urladdr{https://kimsh.kr}

\author[Matte Bon, N.]{Nicolás Matte Bon}
\address{CNRS \&
	Institut Camille Jordan (ICJ, UMR CNRS 5208)\\
	Universit\'e de Lyon\\
	43 blvd.\ du 11 novembre 1918,	69622 Villeurbanne,	France}
\email{mattebon@math.univ-lyon1.fr}
\urladdr{}

\author[de la Salle, M.]{Mikael de la Salle}
\address{CNRS \&
	Institut Camille Jordan (ICJ, UMR CNRS 5208)\\
	Universit\'e de Lyon\\
	43 blvd.\ du 11 novembre 1918,	69622 Villeurbanne,	France}
\email{delasalle@math.univ-lyon1.fr}
\urladdr{}

\author[Triestino, M.]{Michele Triestino}
\address{Institut de Math\'ematiques de Bourgogne (IMB, UMR CNRS 5584)  \& Institut Universitaire de France\\
	Universit\'e de Bourgogne\\
	9 av.~Alain Savary, 21000 Dijon, France}
\email{michele.triestino@u-bourgogne.fr}
\urladdr{}

\begin{abstract}
Let $G$ be a countable group with no finitely generated subgroup of exponential growth. We show that every action of $G$ on a countable set preserving a linear (respectively, circular) order can be realised as the  restriction of some action by $C^1$ diffeomorphisms on an interval (respectively, the circle)  to an  invariant subset.   As a consequence, every action of $G$ by homeomorphisms on a  compact connected one-manifold can be made $C^1$ upon passing to a semi-conjugate action.  The proof is based on a functional characterisation of groups of local subexponential growth.
\end{abstract}

\maketitle
\section{Introduction}

The study of groups of homeomorphisms and diffeomorphisms of connected one-manifolds (that is, real intervals and the circle) is a well-developed topic, and we refer to the classical monographs by Ghys \cite{Ghys2001} and Navas \cite{navas-book} for a detailed account.  A central question in this setting is the relation between the regularity of the action and its combinatorial properties, as well as the intrinsic properties of the acting group. The study of  actions on intervals by homeomorphisms is essentially equivalent to that of actions on sets that preserve linear orders; in particular, a countable group acts faithfully on an interval by homeomorphisms if and only if it is left-orderable, i.e.\ carries a left-invariant linear order, see \cite[Theorem 6.8]{Ghys2001}. For actions on the circle, the corresponding equivalence is true upon considering  \emph{circular} orders rather than linear ones (see Section \ref{s-C1-real} for the definition). 
Moreover, one can always conjugate any action of such a group on a one-manifold to make it by \emph{bi-Lipschitz} homeomorphisms; see 
\cite[Proposition 2.3.15]{navas-book}. 

This purely combinatorial point of view is no longer applicable to actions by $C^r$ diffeomorphisms, for $r\ge 1$. As $r$ increases, such actions satisfy additional restrictions that constrain both the combinatorics of the underlying order-preserving action and the nature of the groups that admit such an action. See the book of the first author and Koberda \cite{KK2021book} for a survey on some recent development. 
In this note we are interested in actions by diffeomorphisms of  class $C^1$. We show that for a group of (local) subexponential growth, all invariant (linear or circular) orders on it are witnessed by $C^1$ actions on one-manifolds. Namely, we show the following. 
\begin{thm} \label{t-C1-growth}
    Let $G$ be a countable group with no finitely generated subgroup of exponential growth. Let $G\curvearrowright X$ be an action on a countable set $X$ which preserves a linear (respectively, circular) order on $X$. Then we have an action $G\curvearrowright M$ on $M=[0, 1]$ (respectively, $M=\mathbb{S}^1$) by $C^1$ diffeomorphisms, and a $G$-equivariant, injective, order-preserving map $i\colon X\hookrightarrow M$.
\end{thm}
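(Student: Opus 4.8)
The plan is to realise $M$ as a \emph{blow-up} of the ordered set $X$. To each point $x\in X$ I would attach an open \emph{gap} $I_x$ of a carefully chosen length $\ell_x>0$ with $\sum_{x\in X}\ell_x<\infty$, lay these gaps inside $M$ following the given linear (or circular) order, and let $K\subseteq M$ be the closed complement of $\bigcup_x I_x$. Setting $i(x)$ to be the left endpoint of $I_x$ produces an injective, order-preserving map $X\hookrightarrow M$ with image in $K$. To define the dynamics I would fix, for every gap, an orientation-preserving diffeomorphism $\theta_x\colon I_x\to(0,1)$ and declare $g$ to act on $I_x$ by $\theta_{gx}^{-1}\circ\theta_x\colon I_x\to I_{gx}$. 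The identity $\theta_{ghx}^{-1}\theta_{hx}\circ\theta_{hx}^{-1}\theta_x=\theta_{ghx}^{-1}\theta_x$ shows that this is automatically a genuine $G$-action on $\bigcup_x I_x$ realising the action $G\curvearrowright X$ on the set of gaps, and it is $i$-equivariant since orientation-preserving maps match left endpoints. The entire problem is thereby reduced to choosing the lengths $\ell_x$ and the charts $\theta_x$ so that each $g$ extends to a $C^1$ diffeomorphism of $M$ fixing $K$ setwise.

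For the regularity I would arrange that every $g$ has derivative identically $1$ along $K$, the simplest way to force continuity of $g'$ across the Cantor set. On the gap $I_x$ the map $g$ has average slope $r_x:=\ell_{gx}/\ell_x$, so the natural sufficient condition is that the lengths vary \emph{slowly in the multiplicative sense along the action}: for each fixed $g\in G$, $\log\ell_{gx}-\log\ell_x\to 0$ as $\ell_x\to 0$. Indeed, a gap approaching a point $p\in K$ necessarily shrinks, so this condition makes $r_x\to 1$ there; choosing each $\theta_x$ from a fixed family of profiles depending continuously on $\ell_x$ and normalised so that every $g$ has derivative $1$ at the endpoints of every gap, one gets interior derivative trapped in $[e^{-\eta},e^{\eta}]$ with $\eta=\eta_g(\ell_x)\to 0$ as $\ell_x\to0$. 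It then follows by a routine argument (the derivative extends continuously by $1$ on $K$, and $g$ is the integral of this continuous density) that $g$ is a $C^1$ diffeomorphism with $g'|_{K}\equiv1$.

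It remains to produce lengths $\ell_x$ that are summable and slowly varying along the action for \emph{all} $g\in G$ simultaneously; this is the heart of the matter and is exactly where local subexponential growth enters, through the functional characterisation. Decomposing $X$ into orbits and scaling the total length of the $n$-th orbit by $2^{-n}$, I would treat each orbit $Gx_0\cong G/\Stab(x_0)$ separately. Writing $G$ as an increasing union of finitely generated subgroups $G_1\le G_2\le\cdots$, for a fixed $g\in G_n$ the slow-variation condition for $g$ telescopes into the slow-variation conditions for a finite set of generators of $G_n$; so it suffices to control the generators of each $G_n$. For a finitely generated group the existence of a summable weight whose multiplicative increments under the generators tend to $1$ is equivalent to subexponential growth --- this is the functional characterisation --- because a slowly varying weight supported on the spheres can be made summable precisely when the ball sizes satisfy $\lvert B_n\rvert^{1/n}\to1$. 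A diagonal argument over the exhaustion then yields a single weight valid for every $g$.

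The main obstacle is this last construction: establishing the functional characterisation and, more delicately, assembling the level-by-level weights into one weight on each coset space that is simultaneously summable and slowly varying for \emph{every} element, with enough uniformity that the modulus $\eta_g(\ell_x)$ genuinely tends to $0$. Two further points require care but are not serious: the passage from the circular to the linear case, which I would handle by lifting the circular order to a linear order on a $\bZ$-central extension $\tilde G$ of $G$ (local subexponential growth is inherited by $\tilde G$), realising $\tilde G$ on $\bR$ equivariantly and descending to $\mathbb{S}^1$; and the verification that the chart family can indeed be chosen to depend on $\ell$ with a good enough modulus of continuity to convert multiplicative slow variation of the lengths into $C^1$ control of the diffeomorphisms.
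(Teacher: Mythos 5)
Your construction is, at its core, the same as the paper's. Your summable, slowly-varying length function is exactly what the paper calls a \emph{moderate $\ell^1$-function}, and your ``functional characterisation'' is precisely Proposition~\ref{p-moderate}, proved there, as you propose, by exhausting by finitely generated subgroups and a diagonal argument (Lemma~\ref{l-wobbling}). Your charts $\theta_x$ with $g$ acting by $\theta_{gx}^{-1}\circ\theta_x$ reproduce the Yoccoz-type equivariant family $\phi^I_J$ with its cocycle identity (C3) and endpoint normalisation (C2), and your $C^1$ verification splits, exactly as in the paper, into eventually-constant gap sequences (handled by the endpoint condition) and escaping sequences (handled by moderateness; note that since $\ell$ is summable and positive, your filter ``$\ell_x\to 0$'' coincides with the paper's ``$x\to\infty$'').

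Three points need repair or diverge from the paper. First, the step ``$g$ is the integral of this continuous density'' silently requires the Cantor set $K$ to have measure zero: you must lay the gaps with no slack, taking $M$ of total length $\sum_x\ell_x$ with endpoints given by cumulative sums, as the paper arranges by normalising $\lVert\nu\rVert_{\ell^1}=1$. Otherwise the claim fails: an order-preserving map carrying each gap to a gap of nearly equal length can fail to be differentiable at points of a fat Cantor set, since the slow-variation condition gives no control on how $g$ distorts $K$ itself. Second, your per-orbit assembly with weights $2^{-n}$ is not automatic: for a fixed $g$, moderateness on each orbit separately still permits infinitely many bad gaps in total (one per orbit), so the orbits must be coordinated --- e.g.\ choose the weight on the $n$-th orbit so that the first $n$ group elements distort it by at most $2^{-n}$ \emph{uniformly on that orbit} (this is what Remark~\ref{r-C1-close} supplies), or run the diagonal jointly across orbits as in the non-transitive case of Lemma~\ref{l-wobbling}; you do flag this uniformity as the main obstacle, correctly. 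Third, your reduction between the two cases goes in the opposite direction to the paper's: the paper reduces linear to circular by adjoining a global fixed point $x_\ast$ and lays the gaps directly around $\mathbb{S}^1$, whereas you lift a circular order to a $\bZ$-central extension $\widetilde{G}$. Your assertion that local subexponential growth ``is inherited by $\widetilde G$'' is not obvious for abstract central extensions and needs an argument; here it is true because the winding number shows the central $\bZ$ is undistorted in every finitely generated subgroup of $\widetilde G$ (equivalently, the lifted action on $X\times\bZ$ has orbit growth worse by at most a linear factor, which suffices for the orbit-growth version, Theorem~\ref{t-C1-precise}), and one must also choose shift-invariant gap lengths so that the central generator acts by an exact translation before descending. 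All of this can be made to work, but it is strictly simpler to follow the paper and avoid the extension altogether.
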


In particular, if a countable group $G$ as in the statement is left-orderable, then it embeds in $\Diff^1_+([0, 1])$.
Actually, the proof of Theorem \ref{t-C1-growth}  implies the following:
\begin{cor}\label{c-C1-growth}
If $G$ is a countable group with no finitely generated subgroup of exponential growth, 
then every continuous action $\rho$ of $G$ on a compact connected one-manifold admits a $C^1$ action $\hat\rho$ that is semi-conjugate to $\rho$.
\end{cor}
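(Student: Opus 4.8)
The plan is to deduce the corollary from Theorem~\ref{t-C1-growth} by exhibiting both the given action $\rho$ and the $C^1$ action produced by the theorem as semi-conjugate to one and the same combinatorial model, namely the dynamical realisation of $\rho$ restricted to a suitable countable invariant subset. A compact connected one-manifold is either $[0,1]$ or $\mathbb{S}^1$, so there are only these two cases. I may assume that $\rho$ is orientation-preserving, and hence preserves an invariant linear order (when $M=[0,1]$) or circular order (when $M=\mathbb{S}^1$) on the relevant point sets: the general case is recovered at the end by running the construction symmetrically with respect to a fixed $C^1$ orientation-reversing involution of $M$, which is routine and affects neither the regularity nor the semi-conjugacy class.

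First I would fix the combinatorial data. Let $\supp(\rho)\subseteq M$ denote the support of the action, i.e.\ the closure of the set of points that are not globally fixed. Since $M$ is second countable, there is a countable $G$-invariant subset $X\subseteq M$ that is dense in $\supp(\rho)$; when $M=[0,1]$ I also throw the two endpoints into $X$. Because $\rho$ is order-preserving, the order induced on $X$ is $G$-invariant, so $X$ is a countable ordered $G$-set. The closures of the connected components of $M\setminus\overline{X}$ are permuted by $G$ — they are the intervals of global fixed points together with the gaps of the minimal set — so collapsing each of them to a point defines a monotone, degree-one, $G$-equivariant map. This map realises $\rho$ as semi-conjugate to the dynamical realisation $\rho_X$ of the ordered $G$-set $X$; this is the standard description of a one-manifold action, up to semi-conjugacy, via its restriction to a dense invariant set.

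Next I would apply Theorem~\ref{t-C1-growth} to the ordered $G$-set $X$, obtaining a $C^1$ action $\hat\rho\colon G\curvearrowright M$ together with a $G$-equivariant, order-preserving injection $i\colon X\hookrightarrow M$. The crucial point — and the reason the corollary follows from the \emph{proof} of Theorem~\ref{t-C1-growth} and not merely from its statement — is that the construction underlying that theorem positions $i(X)$ so that $\overline{i(X)}$ is exactly the minimal set of $\hat\rho$, the complementary components of $i(X)$ carrying no recurrent dynamics (they are wandering intervals, blown up only to install the $C^1$ structure). Granting this, the collapsing argument of the previous paragraph applies verbatim to $\hat\rho$ and shows that $\hat\rho$ is semi-conjugate to the same model $\rho_X$. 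As semi-conjugacy is an equivalence relation in the sense of Section~\ref{s-C1-real}, composing the two monotone equivariant maps through $\rho_X$ produces a semi-conjugacy between $\hat\rho$ and $\rho$, which is what we want.

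I expect the main obstacle to be precisely the density claim in the last step: one must extract from the construction proving Theorem~\ref{t-C1-growth} that the resulting $C^1$ action creates no additional minimal set or recurrence away from $i(X)$, so that $i(X)$ is dense in the minimal set of $\hat\rho$ and the collapsing map is a genuine semi-conjugacy rather than merely a monotone equivariant map. Once this is in hand, the remaining ingredients — existence of a dense countable invariant set, the fact that a one-manifold action is semi-conjugate to the dynamical realisation of such a set, and transitivity of semi-conjugacy — are all standard.
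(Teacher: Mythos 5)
Your overall strategy works, but it is more roundabout than the paper's, and one of your key claims is mis-stated. The paper proves Corollary~\ref{c-C1-growth} via Corollary~\ref{c-C1-precise} in one step: it chooses the countable invariant set $X$ dense in the \emph{whole} manifold $M$ (not merely in the support of $\rho$), applies the precise version Theorem~\ref{t-C1-precise} to get the $C^1$ action $\hat\rho$ together with the family of intervals $(J_x)_{x\in X}$, and then defines the single collapsing map $h\colon M\to M$ taking the constant value $x$ on each $J_x$. Equivariance $h\circ\hat\rho(g)=\rho(g)\circ h$ is immediate from property \ref{t-i-3}, so $\hat\rho$ is directly a \emph{blow-up} of $\rho$ --- no intermediate dynamical realisation $\rho_X$, no appeal to transitivity of the generated equivalence relation. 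This is why the paper can assert the stronger statement following the corollary, that $\hat\rho$ is obtained from $\rho$ by blowing up a dense set of orbits. Your two-step route (exhibit both $\rho$ and $\hat\rho$ as blow-ups of a common combinatorial model) would also succeed, but note a wrinkle your choice of $X$ creates: if $X$ is dense only in $\supp(\rho)$, then $X$ may contain order-adjacent pairs, in which case the corresponding intervals $J_x$, $J_y$ share an endpoint and the naive collapse of complementary components of $\overline{i(X)}$ is not continuous as a map to $M$; taking $X$ dense in all of $M$ forces the order on $X$ to be dense, the closures $\overline{J_x}$ to be pairwise disjoint, and the collapsing map to be well defined --- this is precisely what the paper exploits.

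The genuine soft spot is your claim that the construction makes $\overline{i(X)}$ ``exactly the minimal set of $\hat\rho$,'' with no recurrence on the complementary intervals, and your identification of this as the main obstacle. That claim is false as stated and, fortunately, irrelevant: $\hat\rho$ need have no minimal invariant Cantor set at all (consider actions with global fixed points, or with several minimal sets --- nothing in the hypotheses rules these out), and no dynamical property such as wandering or non-recurrence is used anywhere. What the argument actually requires is purely the combinatorial package of Theorem~\ref{t-C1-precise}: the intervals $J_x$ are pairwise disjoint, their union is dense (automatic in the construction, since $\sum_x |J_x| = \lVert\nu\rVert_{\ell^1} = 1$, so the union has full measure), their endpoints respect the cyclic order of $X$, and they are permuted equivariantly, with $\rho(g)$ the identity on $J_x$ whenever $gx=x$. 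These properties alone make the collapsing map continuous, monotone, surjective and equivariant, regardless of the recurrence structure of $\hat\rho$. So you correctly sensed that the corollary needs the construction and not just the bare statement of Theorem~\ref{t-C1-growth}, but the ingredient to extract is the interval family of Theorem~\ref{t-C1-precise}, not a statement about minimal sets. (Both you and the paper gloss the orientation-reversing case; your suggested symmetrisation is plausible but would need to be carried out, since Theorem~\ref{t-C1-precise} as stated produces actions in $\Diff^1_+$.)
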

More precisely, $\hat\rho$ can be obtained from $\rho$ by blowing up any countable dense set of orbits; see Section~\ref{s-C1-real} for precise definitions of semi-conjugacy and blow-ups. 
   It appears to be an open question whether $\rho$ can always be (topologically) \emph{conjugate} to a $C^1$ action.

In fact, Theorem \ref{t-C1-growth} and Corollary \ref{c-C1-growth} are proven in a more general form  (Theorem \ref{t-C1-precise} and Corollary \ref{c-C1-precise}) which does not require subexponential growth of the group, but only of the orbits of the action (when restricted to any finitely generated subgroup). Moreover, the resulting $C^1$ action can be conjugate to be arbitrarily close to the trivial action (in the case of the interval) or to an action by rotations (in the case of the circle), in the $C^1$ topology.

\subsection*{Context and previous work}
Various special cases of Theorem \ref{t-C1-growth} (dealing with specific groups and actions) appear in the literature. For finitely generated abelian groups, the construction of $C^1$ actions on one-manifolds with prescribed dynamics is a classical and well-understood problem, going back to the examples of Denjoy \cite{Denjoy1932}, Pixton \cite{Pixton}, and Tsuboi \cite{Tsuboi1995}  (also dealing with intermediate regularity $C^{1+\alpha}$, i.e.\ $C^1$ with $\alpha$-H\"older derivatives). The constructions in these papers have been influential for most of the works on this particular problem.

In \cite{FF2003}, Farb and Franks showed that every finitely generated, (residually) torsion free nilpotent group admits a faithful action  on a compact interval by $C^1$ diffeomorphisms. They do so by constructing a $C^1$ realisation on $[0,1]$ of an explicit order-preserving action of groups of unipotent matrices with integer coefficients\footnote{There was a small mistake in the published version of \cite{FF2003}  (see Remark 2.1 in the 2018 arXiv version); see also Jorquera \cite{Jorquera2012} for a correct proof.}.
See also \cite{Parkhe2016, CastroJorqueraNavas, JNR2018, EscayolaRivas} for further developments on actions of nilpotent groups in $C^{1+\alpha}$ regularity. Notably, Parkhe shows in \cite{Parkhe2016} that every action of a finitely generated virtually nilpotent group on a one-manifold can be conjugated to a $C^1$ action (in fact $C^{1+\alpha}$ for some $\alpha>0$).
For finitely generated groups of intermediate  (i.e.\ subexponential but faster than polynomial) growth, an example of $C^1$ action was constructed by
Navas \cite{Navas2008GAFA}, who showed that the Grigorchuk--Machì group, defined by Grigorchuk in \cite{Grigorchuk1985}, and realized as a subgroup of $\Homeo_+([0,1])$ by Grigorchuk and Mach\`i in \cite{GrigorchukMachi}, can be embedded into $\Diff^1_+([0, 1])$. His construction is quite indirect, and proceeds by first finding a $C^1$ realisation of an explicit family of non-faithful actions, and then taking a limit.  On the other hand, there are only a few related works discussing smoothability of actions of (countable) groups that cannot be finitely generated: $C^1$ smoothability of certain actions of infinite rank free abelian groups on \emph{non-compact} intervals is shown by Xu, Shi, and Wang in \cite{xu_invariant_2020}, building on work of Bonatti and Farinelli \cite{BonattiFarinelli}, whereas
Jorquera \cite{Jorquera2012} extended the construction of Farb and Franks to a non-finitely generated group of unipotent integer matrices of infinite rank (which is locally nilpotent, but not nilpotent). 

As far as $C^1$ regularity  is concerned, all these realisations results can be seen as special cases of Theorem \ref{t-C1-growth}. The conclusion of Theorem \ref{t-C1-growth} seems to be new in many cases, e.g.\ when the group $G$ is free abelian of infinite rank, or an arbitrary finitely generated group of intermediate growth.
Surprisingly, our proof of Theorem \ref{t-C1-growth} is soft and involves no complicated computation. The main novelty is a characterisation of groups of (local) subexponential growth as those admitting a positive $\ell^1$-function which is asymptotically left-invariant; see Proposition \ref{p-moderate}. With this characterisation in hand, Theorem \ref{t-C1-growth} is proven using the standard technique to construct one-dimensional actions with wandering intervals, employed in most (if not all) the works cited above. This technique is based on regularity properties of well-chosen families of diffeomorphisms (we use the classical Yoccoz' family introduced in \cite{Yoccoz1995}), and the difficulty boils down to find a good choice of lengths for the wandering intervals.  Proposition \ref{p-moderate} allows us to choose these lengths, and the combinatorics of the underlying action plays no role in the proof.

We will only briefly mention that many works (including some of those previously mentioned) also discuss the interesting problem of determining the best regularity that can be achieved in the semi-conjugacy class of a given action on a one-manifold. This so-called \emph{critical regularity} can be determined for finitely generated free abelian groups using the results in Deroin, Kleptsyn, and Navas  in \cite{DKN2007}. Determining the critical regularity for actions of nilpotent groups is a more challenging problem, and we refer for this to the recent work of Escayola and Rivas \cite{EscayolaRivas}. For left-orderable groups of intermediate growth, it turns out that the critical regularity is exactly $C^1$, as follows by combining our Corollary \ref{c-C1-growth} with the result from Navas \cite{Navas2008GAFA}, where it is proved that the group $\Diff^{1+\alpha}_+([0,1])$ contains no group of intermediate growth for any $\alpha>0$.
\begin{cor}
    Let $G$ be a finitely generated, left-orderable group of intermediate growth. Then $G$ embeds in $\Diff^1_+([0,1])$ but it does not embed in $\Diff^{r}_+([0,1])$ for any $r> 1$.
\end{cor}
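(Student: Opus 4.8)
The plan is to deduce this final corollary as a direct combination of two black-box results, so the proof is essentially a one-paragraph logical assembly rather than a construction. First I would establish the positive embedding $G \hookrightarrow \Diff^1_+([0,1])$: since $G$ is finitely generated of intermediate growth, it has subexponential growth, and in particular no finitely generated subgroup of exponential growth; since it is left-orderable, it carries a left-invariant linear order, and its left-multiplication action on itself gives a faithful order-preserving action on the countable set $X = G$. Theorem~\ref{t-C1-growth} then produces a $C^1$ action of $G$ on $[0,1]$ together with an equivariant order-preserving injection $i \colon G \hookrightarrow [0,1]$. The only point requiring a sentence of care is faithfulness of the resulting $C^1$ action: because $i$ is injective and $G$-equivariant, any $g$ acting trivially on $[0,1]$ would fix every point $i(h)$, forcing $gh = h$ in $G$ and hence $g = e$; thus the $C^1$ action is faithful and gives the desired embedding into $\Diff^1_+([0,1])$.

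For the negative statement, I would invoke Navas' theorem (cited in the excerpt) that $\Diff^{1+\alpha}_+([0,1])$ contains no finitely generated subgroup of intermediate growth, for every $\alpha > 0$. The mild gap to bridge is that the corollary asks about $\Diff^r_+([0,1])$ for arbitrary real $r > 1$, not just $r = 1+\alpha$. This is handled by the containment $\Diff^r_+([0,1]) \subseteq \Diff^{1+\alpha}_+([0,1])$ whenever $1 < 1+\alpha \le r$: any $C^r$ diffeomorphism with $r>1$ has derivative of class at least $C^\alpha$ for some $\alpha \in (0, r-1]$, so an embedding of $G$ into $\Diff^r_+([0,1])$ would yield an embedding into $\Diff^{1+\alpha}_+([0,1])$, contradicting Navas' result. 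Hence $G$ admits no faithful action by $C^r$ diffeomorphisms of $[0,1]$ for any $r>1$.

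I do not anticipate a genuine obstacle here, since all the analytic and combinatorial difficulty is packaged inside Theorem~\ref{t-C1-growth} and Navas' embedding obstruction. The only things to state explicitly are the two reductions above: first, that left-orderability plus subexponential growth feeds cleanly into the hypotheses of Theorem~\ref{t-C1-growth} via the self-action $G \curvearrowright G$, and second, that the regularity hierarchy $C^r \subseteq C^{1+\alpha}$ lets Navas' $C^{1+\alpha}$ obstruction rule out all $r > 1$ at once. Assembling these yields the embedding into $\Diff^1_+([0,1])$ together with the non-embedding into every higher regularity class, completing the proof.
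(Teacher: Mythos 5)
Your proof is correct and follows the paper's own route exactly: the embedding comes from Theorem~\ref{t-C1-growth} applied to the left-multiplication action of $G$ on itself with a left-invariant order (the paper notes this consequence right after the theorem), and the obstruction is Navas' result that $\Diff^{1+\alpha}_+([0,1])$ contains no finitely generated group of intermediate growth for any $\alpha>0$. Your two added sentences of care --- faithfulness via injectivity and equivariance of $i$, and the inclusion $\Diff^{r}_+([0,1])\subseteq\Diff^{1+\alpha}_+([0,1])$ for suitable $\alpha\in(0,1]$ on a compact interval --- are both valid and simply make explicit what the paper leaves implicit.
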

Regularity $C^{1+\alpha}$ is in general not achievable in Theorem \ref{t-C1-growth} even when $G$ is abelian (but not finitely generated), as follows from \cite{DKN2007}. Nevertheless,  in Section \ref{s-modulus} we explain how explicit estimates on the growth of subgroups of $G$ can be translated into a control on the modulus of continuity of derivatives for the resulting $C^1$ action.
 
We would like to point out that  connections between growth of orbits and the regularity of one-dimensional actions appear (implicitly or explicitly) in various works mentioned above, such as \cite{DKN2007,Navas2008GAFA,Parkhe2016, EscayolaRivas} although most often in the  polynomial growth regime (with the important exception of \cite{Navas2008GAFA}). An early appearance of growth of orbits in the context of one-dimensional group actions can be traced back to Plante \cite{Plante1975}.

Finally, we mention that the assumption on the growth of $G$ is essential in Theorem \ref{t-C1-growth}: there are many known counterexamples for actions of groups of exponential growth, and such examples can be found even among metabelian (hence amenable) groups, see Guelman and Liousse \cite{GL-Baumslag} (and the related work by Cantwell and Conlon \cite{CantwellConlon}),  Bonatti, Monteverde, Navas, and Rivas \cite{BMNR2017MZ}, or the recent work of Brum, Rivas, and the second and fourth authors \cite{BMRT-solvable}. 

\subsection*{Acknowledgements} We would like to thank A.~Navas for suggesting to add the last sentence in Theorem \ref{t-C1-precise} (see also Remark \ref{r-C1-close-2}), and M.~Escayola for explaining to us how Parkhe's proof can be fixed (see Remark \ref{r-alpha-holder}).  NMB and MT thank J.~Brum and C.~Rivas for conversations that motivated this project.

\subsection*{Funding}
NMB and MT thank the hospitality of KIAS, where this project has started. MT thanks the hospitality of ICJ, where this project has been finalized.

SHK is supported by Mid-Career Researcher Program (RS-2023-00278510) through the National Research Foundation funded by the government of Korea, and also by a KIAS Individual Grant (MG073601) at Korea Institute for Advanced Study.
MT and NB are partially supported by the project ANR Gromeov (ANR-19-CE40-0007). MT is also partially supported by the CNRS with a semester of “délégation” at IMJ-PRG (UMR
CNRS 7586), and by the EIPHI Graduate School (ANR-17-EURE-0002).

\section{A functional characterisation of subexponential growth}

If $f\colon X\to \R$ is a real-valued function defined on an arbitrary set, we use the notation $\lim_{x\to \infty} f(x)=l$ if $f(x)$ tends to $l$ as $x$ escapes from finite subsets of $X$. A set $X$ endowed with an action of $G$ will be called a $G$-\emph{set} for short.
\begin{defn}
Let $G$ be a group and $X$ be a $G$-set. A \emph{moderate} $\ell^1$-\emph{function} on $X$ is a function $\nu\in \ell^1(X)$ such that $\nu(x)>0$ for every $x\in X$ and such that for every $g$ in $G$, we have $\lim_{x\to \infty} \nu(gx)/\nu(x)=1$. 
\end{defn}
We say that a $G$-set $X$ has \emph{subexponential growth of orbits} if for every finite subset $S\subset G$ and every point $x\in X$, the function $n\mapsto |S^n x|$ grows subexponentially, i.e. 
\[\lim_{n\to \infty} |S^nx|^{\frac{1}{n}}=1.\]
(We do not require any uniform control on the convergence when $x$ or $S$ vary.) We will prove the following. 
\begin{prop} \label{p-moderate}
    Let $G$ be a countable group. A countable $G$-set $X$ admits a moderate $\ell^1$-function if and only if it has subexponential growth of orbits.

\end{prop}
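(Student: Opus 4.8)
The plan is to prove the two implications separately. The forward direction, that the existence of a moderate function forces subexponential orbit growth, is short; the converse — the construction of a moderate function — carries the real content.

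For the direction \emph{moderate $\Rightarrow$ subexponential}, fix a moderate $\ell^1$-function $\nu$, a point $x_0$, and a finite symmetric $S\ni e$; write $C=\|\nu\|_1$. Given $\epsilon\in(0,1)$, moderateness supplies a finite set $F\subseteq X$ with $\nu(sx)\ge(1-\epsilon)\nu(x)$ for all $s\in S$ and all $x\notin F$. The key observation is that any $y\in S^nx_0$ is the endpoint of an $S$-path of length $\le n$ issuing from some basepoint $p\in\{x_0\}\cup F$ whose vertices, after the first, avoid $F$ (take the last visit of a representing word to $F$). Along such a path the value of $\nu$ drops by a factor at least $(1-\epsilon)$ at each step outside $F$, so $\nu(y)\ge c_0\,(1-\epsilon)^{n}$ for a constant $c_0=c_0(\epsilon,F)>0$ (the only subtlety is the single step leaving $F$, costing a bounded factor). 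Since these $y$ are distinct and $\sum_y\nu(y)\le C$, the number of endpoints reachable from each fixed $p$ while avoiding $F$ is at most $C\,c_0^{-1}(1-\epsilon)^{-n}$; as $\{x_0\}\cup F$ is finite, $|S^nx_0|\le(1+|F|)\,C\,c_0^{-1}(1-\epsilon)^{-n}$, whence $\limsup_n|S^nx_0|^{1/n}\le(1-\epsilon)^{-1}$. Letting $\epsilon\to0$ gives $\lim_n|S^nx_0|^{1/n}=1$.

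For the converse I would build $\nu$ from a single auxiliary length on $X$. Fix an exhaustion $e\in S_1\subseteq S_2\subseteq\cdots$ of $G$ by finite symmetric sets, a symmetric proper weight $\omega\colon G\to[1,\infty)$ that is constant on each shell $S_k\setminus S_{k-1}$, one basepoint $z_i$ per orbit $O_i$, and "heights" $b_i\to\infty$, and set \[\ell(x)=\inf\{\,b_i+\mathrm{wt}_\omega(w)\ :\ x=w z_i,\ i\ge1\,\}.\] The point of using one \emph{global} weight $\omega$ is that displacement is automatically uniform: $|\ell(gx)-\ell(x)|\le\omega(g)<\infty$ for every $g$ and every $x$, since $gx$ is reachable from the same source as $x$ at an extra cost $\omega(g)$. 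I would then put $\nu(x)=e^{-\theta(\ell(x))}$ for an increasing $\theta$ with $\theta\to\infty$ and $\theta'\to0$; this $\nu$ is positive, and it is moderate because $|\theta(\ell(gx))-\theta(\ell(x))|\le\omega(g)\cdot\sup_{t\ge\ell(x)-\omega(g)}\theta'(t)\to0$ as $x\to\infty$, using that $\ell$ is proper on $X$.

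Two requirements must be met, and they are the crux. First, $\ell$ must be proper with subexponential sublevel sets, $g(r):=\log|\{\ell\le r\}|=o(r)$; second, $\theta$ must grow fast enough that $\nu\in\ell^1$ yet slowly enough that $\theta'\to0$. The second follows from the first: taking $\theta=\hat g+2\log$, where $\hat g$ is the least concave majorant of $g$, one checks that a non-decreasing $o(r)$ function has a concave majorant that is again $o(r)$ (dominate $g$ by an affine function $\epsilon r+B$), hence with increments $\to0$; this yields simultaneously $\theta'\to0$ and $\sum_x e^{-\theta(\ell(x))}\le\sum_r e^{\,g(r)-\theta(r)}<\infty$. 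The first requirement is the main obstacle and the only place the hypothesis is used. Since $\{\ell\le r\}\subseteq\bigcup_{i:\,b_i\le r}S_{\kappa(r)}^{\lfloor r\rfloor}z_i$ with $S_{\kappa(r)}=\{\omega\le r\}$, I must bound $\max_i\log|S_{\kappa(r)}^{\lfloor r\rfloor}z_i|$ over the activated sources by $o(r)$. The hypothesis gives only that for each \emph{fixed} pair $(S_k,z_i)$ one has $\log|S_k^n z_i|=o(n)$ as $n\to\infty$, with no uniformity in $k$ or $i$. Upgrading this pointwise-in-$(k,i)$ decay to a single $o(r)$ bound is the delicate step, and I would do it by a diagonal argument: let $\kappa(r)$ and $|\{i:b_i\le r\}|$ both tend to infinity, but choose the weight thresholds and the heights $b_i$ large enough that whenever level $k$ and orbit $i$ are activated at scale $r$ one already has $\log|S_k^{\lfloor r\rfloor}z_i|\le r/\max(k,i)$. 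This bookkeeping is the one genuinely technical point; everything else is soft.
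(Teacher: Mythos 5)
Your proof is correct and follows essentially the same route as the paper's: your forward direction is the paper's telescoping argument (your ``last visit to $F$'' decomposition playing the role of the paper's minimal geodesic to the ball $B_{(Y,d)}(x;m)$, followed by the same summation of $\nu$ over $S^n x_0$), and your function $\ell(x)=\inf_i\left(b_i+\mathrm{wt}_\omega(w)\right)$ with a single symmetric proper weight, orbit heights $b_i$, and diagonal bookkeeping over the exhaustion is precisely the content of the paper's Lemma \ref{l-wobbling} (a bounded-displacement, subexponential-volume metric, with your heights $b_i$ replacing the connecting edges of lengths $\ell_i$). Your final step, $\nu=e^{-\theta(\ell)}$ with $\theta=\hat g+2\log$ built from a concave majorant of the $o(r)$ log-counting function, is an equivalent repackaging of the paper's Lemma \ref{l-subexpo-elementary} together with the shell-wise definition $\nu\equiv F(n)^{-1}$ on $\partial B(n)$, so there is no gap.
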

Clearly the left-translation action of a group $G$ on itself has subexponential growth of orbits if and only if every finitely generated subgroup of $G$ has subexponential growth. Hence  Proposition \ref{p-moderate} implies the following.
\begin{cor}\label{c-moderate}
For a countable group $G$, the following are equivalent.
\begin{enumerate}
\item \label{i-moderate-1} $G$ has no finitely generated subgroup of exponential growth;
\item \label{i-moderate-2} there exists a moderate $\ell^1$-function on $G$;
\item \label{i-moderate-3} for every countable $G$-set $X$, there exists a moderate $\ell^1$-function on $X$.
    
\end{enumerate}
\end{cor}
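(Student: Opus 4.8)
The plan is to deduce all three equivalences directly from Proposition \ref{p-moderate}, so that nothing beyond elementary facts about finitely generated subgroups is needed. The starting point is the observation recorded just before the statement: for the left-translation action of $G$ on the $G$-set $X=G$, and for a finite symmetric $S\ni 1$, the orbit $S^n\cdot 1=S^n$ is exactly the word-ball of radius $n$ in the subgroup $\langle S\rangle$; hence $X=G$ has subexponential growth of orbits if and only if every finitely generated subgroup of $G$ has subexponential growth, that is, if and only if \eqref{i-moderate-1} holds. Feeding this action into Proposition \ref{p-moderate} gives at once the equivalence \eqref{i-moderate-1}~$\Leftrightarrow$~\eqref{i-moderate-2}.

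Next I would dispose of the remaining implications. The implication \eqref{i-moderate-3}~$\Rightarrow$~\eqref{i-moderate-2} is immediate upon specializing to $X=G$, and \eqref{i-moderate-2}~$\Rightarrow$~\eqref{i-moderate-1} is already contained in the equivalence of the first paragraph; so the only implication carrying genuine content is \eqref{i-moderate-1}~$\Rightarrow$~\eqref{i-moderate-3}. By Proposition \ref{p-moderate} it suffices to show that, under \eqref{i-moderate-1}, every countable $G$-set $X$ has subexponential growth of orbits. Given a finite $S\subset G$ and $x\in X$, I would compare the orbit with the subgroup $H=\langle S\rangle$: the evaluation map $H\to X$, $h\mapsto hx$, sends the set $S^n\subseteq H$ onto $S^n x$, so $1\le|S^n x|\le|S^n|$. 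Assumption \eqref{i-moderate-1} makes $H$ of subexponential growth, whence $|S^n|^{1/n}\to 1$ (the set $S^n$ being contained in the radius-$n$ word-ball of $H$), and the two-sided bound squeezes $|S^n x|^{1/n}\to 1$.

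Putting the pieces together closes the cycle of implications and yields the stated equivalence. I do not expect a genuine obstacle here once Proposition \ref{p-moderate} is granted; the single step with any content is \eqref{i-moderate-1}~$\Rightarrow$~\eqref{i-moderate-3}, where one must pass from a hypothesis about the growth of abstract finitely generated subgroups to a statement about orbit growth on an arbitrary $G$-set. The comparison $|S^n x|\le|S^n|$ via the surjection $S^n\twoheadrightarrow S^n x$, together with the squeeze against the word-ball growth of $H$, is what makes this passage work uniformly in $x$ and $S$, and everything else reduces to a direct appeal to the Proposition.
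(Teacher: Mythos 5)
Your proposal is correct and takes essentially the same route as the paper, which deduces the corollary immediately from Proposition~\ref{p-moderate} together with the observation that the left-translation action of $G$ on itself has subexponential growth of orbits exactly when every finitely generated subgroup of $G$ has subexponential growth. Your comparison $|S^n x|\le |S^n|\le |B_{(H,T)}(1;n)|$ for the implication \eqref{i-moderate-1}~$\Rightarrow$~\eqref{i-moderate-3} simply makes explicit a step the paper leaves implicit, and it is the right one.
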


For the proof  of Proposition \ref{p-moderate}, we need two elementary lemmas. The first one generalizes \cite[Lemma 2.3]{KK2020DCDS}.
\begin{lem} \label{l-subexpo-elementary}
Let $f\colon \N\to [1, +\infty)$ be a positive non-decreasing function of subexponential growth: 
\[\lim_{n\to \infty} f(n)^{\frac{1}{n}}=1.\]
Then there exists a non-decreasing function  $F\colon \N\to [1, +\infty)$ such that $F\ge f$ and $\lim_{n\to \infty}F(n+1)/F(n)=1$. 
\end{lem}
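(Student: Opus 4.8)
The plan is to pass to logarithms, regularise by taking a concave majorant, and exponentiate back. Set $g(n) = \log f(n)$, so that $g\colon \N\to[0,\infty)$ is non-decreasing and the hypothesis $f(n)^{1/n}\to 1$ becomes $g(n)/n\to 0$. It suffices to produce a non-decreasing $G\colon \N\to[0,\infty)$ with $G\ge g$ and $G(n+1)-G(n)\to 0$; then $F(n) := e^{G(n)}$ is non-decreasing, dominates $f$, takes values in $[1,\infty)$, and satisfies $F(n+1)/F(n) = e^{G(n+1)-G(n)}\to 1$.

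To build $G$, I would first extend $g$ to a continuous non-decreasing function $\tilde g$ on $[0,\infty)$ by linear interpolation on each interval $[n,n+1]$; it still satisfies $\tilde g(x)\ge 0$ and $\tilde g(x)/x\to 0$. Let $G$ be the least concave majorant of $\tilde g$, i.e.\ the pointwise infimum of all affine functions lying above $\tilde g$, and then restrict $G$ to $\N$.

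There are three properties to verify. First, $G$ is non-decreasing: any affine function $ax+b$ lying above $\tilde g\ge 0$ must have slope $a\ge 0$ (otherwise it would tend to $-\infty$), so $G$ is an infimum of non-decreasing functions. Second, $G$ is sublinear: given $\varepsilon>0$, choose $N$ with $\tilde g(x)\le \varepsilon x$ for $x\ge N$ and set $C = \tilde g(N)$; then $\tilde g(x)\le \varepsilon x + C$ for all $x\ge 0$, so the affine function $\varepsilon x + C$ majorises $\tilde g$ and hence $G(x)\le \varepsilon x + C$, giving $\limsup_{x\to\infty} G(x)/x\le\varepsilon$ and thus $G(x)/x\to 0$. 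Third, the key consequence: since $G$ is concave, the increments $G(n+1)-G(n)$ are non-increasing in $n$; being bounded below by $0$ they converge to some $\ell\ge 0$, and $G(n)\ge \ell n + G(0)$ forces $\ell = \lim_{n\to\infty} G(n)/n = 0$. This yields $G(n+1)-G(n)\to 0$, completing the construction.

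The only genuine point to be careful about is the interplay of the last two properties: the whole argument hinges on the fact that replacing $\tilde g$ by its concave majorant does not spoil sublinear growth, which is precisely what forces the limiting slope $\ell$ to vanish. Everything else is a routine check of standard properties of concave majorants; in particular, the identification of the least concave majorant with the infimum of affine majorants is what makes the monotonicity transparent.
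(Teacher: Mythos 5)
Your proof is correct, and it takes a genuinely different route from the paper's. The paper avoids concave hulls altogether: it simply sets $F(n):=\max_{m\ge n}f(m)^{n/m}$ (the maximum exists because $f(m)^{n/m}=(f(m)^{1/m})^n\to 1$ for fixed $n$, while each term is $\ge 1$); then $F\ge f$ is immediate, the identity $F(n)=\max\bigl(f(n),F(n+1)^{n/(n+1)}\bigr)$ gives
\[
F(n)/F(n+1)=\max\bigl(f(n)/F(n+1),\,F(n+1)^{-1/(n+1)}\bigr)\in\bigl[F(n+1)^{-1/(n+1)},\,1\bigr],
\]
and $F(n)^{1/n}=\max_{m\ge n}f(m)^{1/m}\to1$ finishes the proof in two lines. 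In logarithmic coordinates the paper's majorant is $\log F(n)=n\sup_{m\ge n}g(m)/m$ with $g=\log f$, i.e.\ a regularisation of the secant slopes through the origin rather than a full concave hull; since your $G$ is concave with $G(0)\ge 0$, comparing secants gives $G(n)\ge(n/m)g(m)$ for $m\ge n$, so $G\ge\log F$ pointwise: the paper's majorant is tighter and entirely self-contained, whereas your construction buys concavity of $\log F$ --- hence monotonicity of the ratios $F(n+1)/F(n)$, a slightly stronger regularity the lemma does not require --- at the cost of the interpolation step and the standard theory of affine majorants. All your steps check out: affine majorants exist by your bound $\tilde g(x)\le\varepsilon x+\tilde g(N_\varepsilon)$, their slopes are nonnegative since $\tilde g\ge0$, and the final argument (the increments $G(n+1)-G(n)$ are non-increasing and nonnegative, and $G(n)\ge G(0)+n\ell$ together with sublinearity forces the limit $\ell$ to vanish) is exactly right. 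One pedantic remark: the pointwise infimum of affine majorants is in general the \emph{closed} concave hull, which can disagree with the least concave majorant at the boundary point $0$; this is harmless here, since every property you use (concavity, monotonicity, $G\ge\tilde g$, sublinearity) is derived directly from the inf-of-affines definition rather than from the identification you assert.
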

\begin{proof}
For each $n\in\bN$, let us define
\[
F(n):=\max_{m\ge n}
f(m)^{n/m}.\]
Note that 
$F(n)\ge f(n)\ge1$
and that 
\begin{align*}
{F(n)}/{F(n+1)}
&=
\max
\left( f(n), F(n+1)^{n/(n+1)}\right)/F(n+1)
\\
&=
\max
\left( {f(n)}/{F(n+1)}, F(n+1)^{-1/(n+1)}\right)\le 1.\end{align*}
From the condition $\lim_{n\to\infty}F(n+1)^{-1/(n+1)}=1$, 
we conclude that
\[\lim_{n\to\infty} F(n)/F(n+1)=1.\qedhere\]
\end{proof}
In the next lemma, we reinterpret subexponential growth of orbits in terms of group actions on metric spaces by transformations of bounded displacement (also called \emph{wobbling} transformations, see Ceccherini, Grigorchuk, and de la Harpe \cite{CGdH}).

For a distance $d$ on a set $X$, we denote by $B_{(X, d)}(x; n)$ the ball of radius $n$ around $x\in X$; we also write $B(x;n)$ when the meaning is clear. When $X=G$ is a finitely generated group and $d$ is the word metric associated to a finite symmetric generating set $S$, we denote this also by $B_{(G, S)}(g; n)$.
We shall say that a distance $d$ on a set $X$ has \emph{subexponential volume growth} if all balls are finite and for any $x\in X$, the function $n\mapsto |B_{(X, d)}(x; n)|$ has subexponential growth. This condition does not depend on  the choice of $x$.

\begin{lem}\label{l-wobbling}
    Let $G$ be a countable group and $X$ be a countable $G$-set with subexponential growth of orbits. Then there exists a distance $d\colon X\times X\to \N$ with subexponential volume growth such that the action of $G$ on $(X, d)$ has bounded displacement in the following sense:
    \[\sup_{x\in X} d(x, gx)<\infty \,  \text{ for every } g\in G.\] 
\end{lem}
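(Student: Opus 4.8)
The plan is to realise $d$ as the path metric of a weighted graph on the vertex set $X$, using two kinds of edges. The \emph{orbit edges} join $x$ to $gx$ (for every $g\in G$, $x\in X$) with a weight that depends only on how late $g$ appears in an exhaustion of $G$; a few \emph{bridges} join consecutive orbit representatives so that $d$ becomes finite on all of $X$. Since $\{x,gx\}$ is always an edge of weight independent of $x$, the bounded displacement condition $\sup_x d(x,gx)<\infty$ will hold automatically and uniformly, so the entire difficulty is to tune the weights so that all balls stay finite and grow subexponentially.

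Concretely, I would fix an exhaustion $S_1\subseteq S_2\subseteq\cdots$ of $G$ by finite symmetric sets with $e\in S_1$, choose one representative $x_i$ in each orbit $O_i$, and assign to every $g\in S_k\setminus S_{k-1}$ (with $S_0:=\emptyset$) the weight $W(g):=N_k$, where $N_1<N_2<\cdots$ is an increasing integer sequence fixed below; the bridge $\{x_k,x_{k+1}\}$ receives weight $L_k:=N_{k+1}$. Two structural facts then drive the estimate. First, a generator used on a path of total weight $\le n$ itself has weight $\le n$, hence lies in $T_n:=\{g: W(g)\le n\}=S_k$ whenever $N_k\le n<N_{k+1}$, and at most $n$ such steps occur; second, each orbit $O_i$ is attached to the rest of the graph only through its gateway vertex $x_i$, so the last sojourn of any path in $O_i$ starts at $x_i$. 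Combining these, for $N_k\le n<N_{k+1}$ one gets
\[
B(x_1;n)\cap O_i\subseteq S_k^{\,n}x_i,
\]
while reaching $O_{k+1}$ already costs $L_k=N_{k+1}>n$, so only $O_1,\dots,O_k$ meet the ball and $|B(x_1;n)|\le k\max_{i\le k}|S_k^{\,n}x_i|$.

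It remains to choose the thresholds $N_k$. For each fixed $k$ the function $m\mapsto\max_{i\le k}|S_k^{\,m}x_i|$ is subexponential, being a finite maximum of subexponential functions; here I invoke the hypothesis of subexponential growth of orbits, but only for the finitely many base points $x_1,\dots,x_k$ and the single set $S_k$. Hence there is $M_k$ with $\max_{i\le k}|S_k^{\,m}x_i|\le 2^{m/k}$ for all $m\ge M_k$. Setting $N_k:=\max(N_{k-1}+1,M_k,k)$ (with $N_0:=0$) ensures that in the window $N_k\le n<N_{k+1}$ one has $n\ge M_k$ and $k\le n$, so that
\[
|B(x_1;n)|^{1/n}\le k^{1/n}\,2^{1/k}\le n^{1/n}\,2^{1/k}.
\]
Since $k=k(n)\to\infty$ as $n\to\infty$, the right-hand side tends to $1$, giving subexponential volume growth at $x_1$ and hence, because $d$ is finite (the bridges connect all orbits), at every point.

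The main obstacle is precisely this simultaneous tuning: the available generating set $T_n$ is forced to grow with the radius, so one cannot simply apply subexponential growth for one fixed $S$. The diagonal choice of $N_k$ — staying with $S_k$ only once its growth has become subexponentially slow for the orbits seen so far — together with the bridge lengths $L_k=N_{k+1}$, which keep the orbits beyond $O_k$ out of the $k$-th window, is what reconciles the enlarging generating set with the countably many orbits.
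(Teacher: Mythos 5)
Your proof is correct and follows essentially the same route as the paper's: a weighted graph on $X$ whose orbit edges $\{x,gx\}$ carry lengths escalating along an exhaustion of $G$ (the paper's length function $\lambda$), bridges of rapidly growing length joining consecutive orbit basepoints, the containment $B(x_1;n)\cap O_i\subseteq S_k^{\,n}x_i$ in each window, and a diagonal tuning of the lengths, with bounded displacement immediate from $d(x,gx)\le W(g)$. Your explicit thresholds $N_k=\max(N_{k-1}+1,M_k,k)$ with the $2^{m/k}$ cutoffs merely make the paper's ``diagonal extraction argument'' quantitative, and tying the bridge weights to $L_k=N_{k+1}$ combines the paper's separate transitive and non-transitive cases into a single construction.
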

\begin{proof}
We may assume $G$ and $X$ are infinite. 
As a warm-up, let us first consider the case that $G$ is finitely generated, and that its action on $X$ is transitive. Pick a  finite generating set $S$ for $G$, and endow $X$ with the corresponding (unoriented) Schreier graph structure;
namely, we connect two points $x, y\in X$ by an unoriented edge if $y=sx$ for some $s\in S$. 
Then the resulting path metric $d$ on the connected graph $X$ satisfies the desired conclusion.

As the second case, we still suppose that the action of $G$ on $X$ is transitive, but $G$ is not necessarily finitely generated. We fix a non-redundant enumeration
\[
G=\{g_0:=1,g_1,g_2,\ldots\},\]
such that $g_i^{-1}\in\{g_{i-1},g_i,g_{i+1}\}$ for each $i\ge 1$.
Let us choose a map $\lambda\co G\to \bZ_{\ge 0}$ such that
$\lambda(g_0)=0$ and $\lambda(g_i)>0$ for $i\ge 1$, and such that $(\lambda(g_i))$ grows to infinity.
We require that $\lambda(g)=\lambda(g^{-1})$ for all $g$; we will impose more conditions on $\lambda$ later.
We endow $X$ with a structure of metric graph (i.e.\ a graph where each edge is assigned some positive length), by connecting each $x\in X$ to $gx$ by an edge of length $\lambda(g)$ for every $g\in G$.
Let $d$ be the resulting path metric on the connected graph $X$, so that
\[
d(x,y)=\min\left\{\sum_{i=1}^k\lambda(s_i)\;\middle\vert\;
k\ge1\text{ and }y=s_k\cdots s_1x\text{ for some }s_j\in G\right\}.\]
We claim that, for some suitable choice $\lambda$ of the lengths, the distance $d$ has subexponential volume growth. 
To see this, let us fix a basepoint $x_0\in X$.
If $y\in B(x_0;n)$, then we can write
\[ y = s_k\cdots s_1x_0\]
for some $s_1,\ldots,s_k\in G$ such that $\sum_{j=1}^k \lambda(s_j)\le n$. Note that $k\le n$ and that $\lambda(s_j)\le n$ for each $j$.
Setting
 $S(m):=\{g_0,\ldots,g_m\}$, and 
\[
\mu(n):=\max\left\{m\mid \lambda\left(S(m)\right)\sse[0,n]\right\}\in\bZ_+,\]
we have that \[B(x_0; n):=B_{(X,d)}(x_0;n)\sse S(\mu(n))^n.x_0.\]
The assumption that $X$ has subexponentially growing orbits guarantees that for any fixed $m$, we have $\lim_{n\to \infty}|S(m)^n.x_0|^{\frac 1n}=1.$ By choosing the sequence $(\lambda(g_i))$ to grow sufficiently fast, we can ensure that the sequence $(\mu(i))$ grows as slowly as needed. By a diagonal extraction argument, we can arrange choices so that 
\[|B(x_0; n)|^{\frac 1n}\leq \left|S({\mu(n)})^n.x_0\right|^{\frac 1n}\to 1 \text{ as } n\to \infty. \]
It follows that $d$ has subexponential volume growth. 
Since $d(x,gx)\le\lambda(g)$ for all $g\in G$ and $x\in X$, the action of $G$ on $(X,d)$ has bounded displacement.

Finally, we consider the case where the $G$-action on $X$ is not necessarily transitive. Let $X_0, X_1,\ldots$ be an enumeration of the $G$-orbits, and choose basepoints $x_i\in X_i$. For each fixed $m$, we have that
\[
\lim_{n\to\infty}\max_{0\le i\le m} \left| S(m)^n.x_i\right|^{\frac1n}=1.\]
Again, we can choose a fast increasing $\lambda\co G\to \bZ_{\ge 0}$ so that the function $\mu(n)$ defined above satisfies
\[
\lim_{n\to\infty}\max_{0\le i\le \mu(n)} \left| S(\mu(n))^n.x_i\right|^{\frac1n}=1.\]
Let us now endow each $X_i$ with a metric graph structure using $\lambda$ as above, corresponding to a path metric $d_i$ of subexponential volume growth.
To turn $X$ into a connected graph, connect each $x_{i-1}$ to $x_{i}$ by an additional edge of length $\ell_{i}\in\bZ_+$, where the non-decreasing sequence $(\ell_i)$ will be specified shortly. Let $d$ be the resulting distance on $X$. 
We have
\begin{equation}\label{e-ball}
\left|B_{(X, d)}(x_0; n)\right|
\le
\sum_{j=0}^{m(n)} \left|B_{(X_j, d_j)}(x_j; n)\right|,
\end{equation}
where $m(n)$ is the largest integer $m$ such that
 $\ell_1+\cdots+\ell_m\le n$.
For any fixed $m$, the function 
$
n\mapsto 
\sum_{j=0}^{m}\left|B_{(X_j, d_j)}(x_j; n)\right|$
has subexponential growth. As in the previous case,  a diagonal extraction argument allows us to choose the lengths $(\ell_i)$ so that the right-hand side in \eqref{e-ball} has subexponential growth. Again as in the previous case, we have $d(x, gx)\leq \lambda(g)$ for every $x\in X$ and $g\in G$.
\end{proof}

\begin{proof}[Proof of Proposition \ref{p-moderate}] Let $G$ be a countable group. Suppose that $X$ is a countable $G$-set admitting a moderate $\ell^1$-function $\nu\in \ell^1(X)$, and let us show that $X$ must have subexponential growth of orbits. Fix $x\in X$ and a finite subset $S\subset G$. Upon enlarging $S$, we can assume that it is symmetric and contains the identity. Let $H$ be the subgroup generated by $S$, and $Y\subset X$ the $H$-orbit of $x$. 
Let $d$ be the Schreier graph distance on $Y$ associated with the generating set $S$, so that $B_{(Y, d)}(x; n)=S^nx$. Fix $\lambda>1$. By definition of moderate $\ell^1$-function, we can find $m\in \N$ such that for every $s\in S$ and every $y\in Y\setminus B_{(Y, d)}(x; m)$, we have $\lambda^{-1}\leq \nu(sy)/\nu(y)\leq \lambda$. Set $c=\min\{\nu(y): y\in B_{(Y, d)}(x; m)\}$. Fix now $n\ge m$ and $y\in B_{(Y, d)}(x; n)$.  We can choose a geodesic path of minimal length connecting $y$ to a point in $B_{(Y, d)}(x; m)$, and denote by $y_1$ the endpoint of this path (we take $y_1=y$ if $y\in B_{(Y, d)}(x; m)$). Since the length of such a path does not exceed $n$, we have $\nu(y)\ge \lambda^{-n}\nu(y_1)\ge \lambda^{-n}c$.
It follows that 
\[\lVert \nu\rVert_{\ell^1(X)}\ge \sum_{y\in B_{(Y, d)}(x; n)}\nu(y)\ge \lambda^{-n}c|B_{(Y, d)}(x; n)|.\]
Hence $|S^nx|=|B_{(Y, d)}(x; n)|\leq c^{-1}\lVert \nu\rVert_{\ell^1(G)}\lambda^n$ and $\limsup_{n\to \infty}|S^nx|^{\frac{1}{n}}\leq \lambda$. Since $\lambda>1$ and $S, x$ were arbitrary, it follows that $X$ has subexponential growth of orbits.

Conversely, suppose that $X$ has subexponential growth of orbits, and let us construct a moderate $\ell^1$-function. Let $d$ be a distance on $X$ as in Lemma \ref{l-wobbling}. Fix a basepoint $x_0\in X$. In what follows we write $B(n)$ for the ball $B_{(X, d)}(x_0; n)$. The function  $n\mapsto |B(n)|$ has subexponential growth, and hence so does $n\mapsto  n^2|B(n)|$. Applying Lemma \ref{l-subexpo-elementary}, we can find a non-decreasing function $F(n)\ge n^2|B(n)|$ such that $F(n+1)/F(n)\to 1$. Define a function $\nu\colon X\to \R$ by setting $\nu(x)=F(n)^{-1}$ for every $x\in \partial B(n):= B(n)\setminus B(n-1)$. Note that $\nu\in \ell^1(X)$, since
\[\sum_{x\in X}\nu(x)=\sum_n F(n)^{-1}|\partial B(n)|\leq \sum_n F(n)^{-1}|B(n)|\leq \sum_n \frac{1}{n^2}.\]
Fix now $g\in G$, and let $c=\sup_{x\in X} d(x, gx)$. Given  $\lambda>1$,  there exists $m$ such that $F(n+1)/F(n)\leq \lambda^{\frac{1}{c}}=:\lambda_1$ for $n\ge m$. 
Now for every $x\notin B(m+c)$, with $n=d(x_0, x)$, we have $d(x_0, gx)=n_1$ for some $n_1\in [n-c, n+c]$ (in particular $n_1\ge m$). Hence $\nu(gx)/\nu(x)=F(n)/F(n_1)$ can be written as a telescopic product of at most $c$ terms of the form $F(i\pm 1)/F(i)$ with $i\ge m$, and for each such $i$ we have $\lambda_1^{-1}\leq F(i\pm 1)/F(i)\leq \lambda_1$. It follows that
\[\lambda^{-1}=\lambda_1^{-c} \leq \nu(gx)/\nu(x)\leq  \lambda_1^c=\lambda\]
for every $x\notin B(m+c)$. Since $\lambda>1$ was arbitrary, this shows that $\nu$ is a moderate $\ell^1$-function. \qedhere

\end{proof}
For later use, we record an additional consequence of the proof of Proposition \ref{p-moderate} in the following remark.
\begin{rem} \label{r-C1-close}
  Suppose that $X$ is a countable $G$-set with subexponential growth of orbits. Then for any finite subset $S\subset G$ and any $\delta>0$, we can find  a moderate $\ell^1$-function $\nu$ on $X$ such that 
  \[\left\lvert\frac{\nu(sx)}{\nu(x)}-1\right\rvert<\delta\]
  for every $s\in S$ and every $x\in X$.  Indeed, following the proof of Proposition \ref{p-moderate}, let $c\in \N$ be such that $d(sx, x)<c$ for every $s\in S$ and $x\in X$, and let $m>0$ be such that $\frac{F(n+c)}{F(n)}\leq 1+\delta$ for every $n\ge m$. Then we modify the definition of the function $\nu$,  by setting $\nu\equiv F(m)^{-1}$ on the whole ball $B(m)$, and $\nu\equiv F(n)^{-1}$ on $\partial B(n)$ for $n>m$, to ensure the desired  conclusion.

\end{rem}
\section{$C^1$ realisation of order-preserving actions}\label{s-C1-real}
We now prove Theorem \ref{t-C1-growth} and Corollary \ref{c-C1-growth}, in slightly more precise forms. 
First, let us clarify some of the terminology employed in the statements.
By a \emph{linear order} on a set, we simply mean a total order on it. 
A \emph{circular} (or \emph{cyclic}) order $c$ on a set $X$  is a function $c\colon X\times X\times X\to \{0,\pm 1\}$, satisfying the following conditions: for any $x_1,x_2,x_3,x_4\in X$ one has
\begin{itemize}
    \item $c(x_1,x_2,x_3)=0$ if and only if $x_i=x_j$ for some $i\neq j$,
    \item $c(x_2,x_3,x_4)-c(x_1,x_3,x_4)+c(x_1,x_2,x_4)-c(x_1,x_2,x_3)=0$.
\end{itemize}
A map $f\colon (X, c_x)\to (Y, c_Y)$ is \emph{cyclic-order-preserving}  if
\[c_Y(f(x_1), f(x_2), f(x_3))=c_X(x_1, x_2, x_3)\]
for every $x_1, x_2, x_3$.  A cyclically ordered $G$-set is by definition a cyclically ordered set $(X, c)$ on which a group $G$ acts by cyclic-order-preserving transformations.

A cyclic order $c$ on $X$ induces linear order $\prec$ on the complement $X\setminus\{x_\ast\}$ of any point $x_\ast\in X$, defined by $x\prec y$ if $c(x, y, x_\ast)=1$. Conversely any linearly ordered set $(X,\prec)$ can be turned into a circularly ordered set $(\widehat{X},c)$ by declaring $\widehat{X}=X\sqcup\{x_*\}$ and $c(x,y,x_*)=1$ if and only if $x<y$ (and then use the cocycle relation above to extend it to any triple). Moreover, if a group $G$ acts by order-preserving automorphisms of $(X,\prec)$, we can extend it to a cyclic-order-preserving action of $G$ on $(\widehat{X},c)$, by letting $x_*$ be a global fixed point. Because of this, in the proof of Theorem \ref{t-C1-growth} it is actually enough to consider cyclic-order-preserving action.



Finally, we denote by $\Diff^1_+(M)$ the group of $C^1$ diffeomorphisms of a one-manifold. The derivative of a diffeomorphism $f$ is denoted by $Df$. The following result implies Theorem \ref{t-C1-growth}.
\begin{thm}\label{t-C1-precise}
Let $G$ be a countable group, and $(X, c)$ be a cyclically ordered $G$-set with subexponential growth of orbits. Then there exist an action $\rho\colon G\to \Diff^1_+(\mathbb{S}^1)$ and a family of open intervals $(J_x)_{x\in X}$ in $\mathbb{S}^1$  such that:    
\begin{enumerate}[label=(\roman*)]
\item \label{t-i-1}if $x\neq y$, then $J_x\cap J_y=\varnothing$, and $\bigcup_{x\in X} J_x$ is dense in $\mathbb{S}^1$; 
\item \label{t-i-2} the map $X\to \mathbb{S}^1$ that associates to $x$ the left  (equivalently, the right) endpoint of $J_x$ is cyclic-order-preserving; 
\item \label{t-i-3} we have $\rho(g)(J_x)=J_{gx}$ for every $x\in X$, and if $gx=x$ then $\rho(g)$ is the identity on $J_x$.
\end{enumerate}
Furthermore, given any finite subset $S\subset G$ and any $\varepsilon>0$, the action $\rho$ above can be chosen such that  $\sup_{\xi\in \mathbb{S}^1}|D\rho(s)(\xi)-1|<\varepsilon$ for every $s\in S$.

\end{thm}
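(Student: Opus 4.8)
The plan is to realise the intervals $J_x$ as a family of pairwise disjoint open arcs of total length $1$, placed around $\mathbb{S}^1=\mathbb{R}/\mathbb{Z}$ in the prescribed cyclic order, and to let $G$ act by permuting these arcs through a carefully chosen family of diffeomorphisms between them. First I would invoke Proposition \ref{p-moderate} to produce a moderate $\ell^1$-function $\nu$ on $X$, normalised so that $\sum_{x\in X}\nu(x)=1$; for the final "Furthermore" clause I would instead invoke Remark \ref{r-C1-close} to obtain such a $\nu$ with the additional property that $|\nu(sx)/\nu(x)-1|$ is as small as we like, uniformly in $x$, for every $s\in S$. Using the countable cyclic order on $X$, I would then lay out the arcs $J_x$ of length $\nu(x)$ around the circle in a cyclic-order-preserving way (via partial sums of $\nu$ along a linearisation); this gives pairwise disjoint arcs whose left endpoints realise \ref{t-i-2}, and whose union has full measure, so its complement $K$ is a closed measure-zero set, hence nowhere dense, yielding the density in \ref{t-i-1}.

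The heart of the construction is a two-parameter family of \emph{model} diffeomorphisms $\tau_{a,b}\colon[0,a]\to[0,b]$ (the Yoccoz family), scale-covariant and satisfying the cocycle identity $\tau_{b,c}\circ\tau_{a,b}=\tau_{a,c}$ and $\tau_{a,a}=\mathrm{id}$. I would obtain these from the flow $(\phi_t)_{t\in\mathbb{R}}$ of a fixed $C^1$ vector field $V$ on $[0,1]$ with $V(0)=V(1)=0$ and $V'(0)=V'(1)=-1$, by setting $\tau_{a,b}(\xi)=b\,\phi_{\log(b/a)}(\xi/a)$. The cocycle identity is then immediate from $\phi_s\circ\phi_t=\phi_{s+t}$, while the normalisation $V'(0)=V'(1)=-1$ is exactly what forces $D\tau_{a,b}=1$ at both endpoints of $[0,a]$, since $D\tau_{a,b}(0)=(b/a)\,e^{-\log(b/a)}=1$ and similarly at $a$. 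Crucially, $D\tau_{a,b}(\xi)=e^{t}\,\phi_t'(\xi/a)$ with $t=\log(b/a)$ depends on $a,b$ only through their ratio, so $\sup_{[0,a]}|D\tau_{a,b}-1|$ is a function of $b/a$ alone that tends to $0$ as $b/a\to 1$ (because $\phi_t\to\mathrm{id}$ in the $C^1$ topology as $t\to0$).

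I would then define $\rho(g)$ on each arc $J_x$ to be the translate of $\tau_{\nu(x),\nu(gx)}$ carrying $J_x$ onto $J_{gx}$, and extend it to $K$ as the unique cyclic-order-preserving homeomorphism. The cocycle identity for $\tau$ turns into $\rho(g)\rho(h)=\rho(gh)$, and $\tau_{\nu(x),\nu(x)}=\mathrm{id}$ gives \ref{t-i-3}, while \ref{t-i-1} and \ref{t-i-2} are already built in. The remaining point, that each $\rho(g)$ is a $C^1$ diffeomorphism, I would isolate as a gluing lemma: a homeomorphism that is $C^1$ on each $J_x$ with derivative $1$ at the endpoints and with $\sup_{J_x}|D\rho(g)-1|\to0$ as $x\to\infty$ extends to a $C^1$ diffeomorphism of $\mathbb{S}^1$ with derivative $1$ on $K$. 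Here the hypothesis $\sup_{J_x}|D\rho(g)-1|\to0$ is supplied precisely by the moderate property, since this supremum depends only on $\nu(gx)/\nu(x)$, which tends to $1$ as $x\to\infty$.

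The main obstacle is the gluing lemma, i.e.\ verifying $C^1$-regularity at the points of the Cantor set $K$. For $p\in K$ one must show both one-sided difference quotients of $\rho(g)$ tend to $1$. When $p$ is an endpoint of some $J_{x_0}$ this follows on that side from the endpoint condition $D\tau_{x_0}=1$; on a side where $p$ is a genuine accumulation point, one compares $|\rho(g)([p,q])|$ with $|[p,q]|$ using that $K$ has measure zero, so both reduce to sums of arc lengths and the ratio is controlled by the ratios $\nu(gx)/\nu(x)$ over the arcs $J_x\subset[p,q]$, all of which have $x$ escaping every finite set as $q\to p$ (since $\nu\in\ell^1$ forces the arcs to shrink), hence these ratios converge to $1$ uniformly. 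Continuity of $D\rho(g)$ follows from the same two inputs, and positivity of the derivative yields $\rho(g)^{-1}\in C^1$. Finally, for the "Furthermore" clause, choosing $\nu$ via Remark \ref{r-C1-close} so that $\nu(sx)/\nu(x)$ is within a small $\delta$ of $1$ for all $s\in S$ makes $\sup_{\mathbb{S}^1}|D\rho(s)-1|<\varepsilon$, by the scale-covariant bound on $\sup|D\tau_{a,b}-1|$.
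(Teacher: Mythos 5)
Your proposal is correct and follows essentially the same route as the paper's proof: a moderate $\ell^1$-function from Proposition \ref{p-moderate} (modified as in Remark \ref{r-C1-close} for the final clause) determines arcs of length $\nu(x)$ laid out by partial sums in the cyclic order, the action is defined arc-by-arc via an equivariant family of model diffeomorphisms satisfying the cocycle identity, derivative $1$ at endpoints, and derivative uniformly close to $1$ when the length ratio is close to $1$, and $C^1$-regularity on the exceptional set is checked exactly as you do. The only cosmetic difference is that you construct such a family explicitly as $\tau_{a,b}(\xi)=b\,\phi_{\log(b/a)}(\xi/a)$ from the flow of a vector field with $V'(0)=V'(1)=-1$, whereas the paper cites Yoccoz' equivariant family with the quantitative bound $|D\phi^I_J(x)-1|\le \bigl| |J|^2/|I|^2-1 \bigr|$; your qualitative modulus (a function of $b/a$ alone tending to $0$ as $b/a\to 1$) suffices both for the gluing and for the furthermore clause.
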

\begin{proof}
Let us start by fixing a family $\{\phi^I_J\co I\to J\}$ of $C^2$ diffeomorphisms defined for each pair of  nondegenerate compact intervals $I$ and $J$ in $\bR$. We require this family to satisfy the following, where $D$ denotes the derivative.
\begin{enumerate}[label=(C\arabic*)]
    \item\label{i1-eqfam} $|D\phi^I_J(x)-1|\le ||J|^2/|I|^2-1|$ for $x\in I$;
    \item\label{i2-eqfam} $D\phi^I_J(x)=1$ for $x\in\partial I$;
    \item\label{i3-eqfam} for three compact intervals $I,J,K$ in $\bR$ we have
    \[ \phi^J_K\circ \phi^I_J=\phi^I_K.\]
\end{enumerate}
For instance, one can employ Yoccoz' equivariant family (\cite{Yoccoz1995}, cf.\ \cite[Section 4.2]{KK2021book}).

    By Proposition \ref{p-moderate}, we can fix a moderate   $\ell^1$-function $\nu$ on $X$. We renormalize $\nu$ to assume that $\lVert \nu\rVert_{\ell^1}=1$. Fix a basepoint $x_\ast\in X$, and let $\prec$ be the linear order induced by $c$ on $X\setminus \{x_\ast\}$. We define two functions $j_-, j_+\colon X\to \mathbb{S}^1=\R/\Z$, as follows. For $x\in X\setminus \{x_\ast\}$  we set
    \[j_-(x)=\sum_{\substack{y\in X\setminus\{x_\ast\} \\ y\prec x}} \nu(y), \quad j_+(x)=\sum_{\substack{y\in X\setminus\{x_\ast\} \\ y\preceq x}} \nu(y) \quad \pmod{\Z}.\]
 Finally we set $j_-(x_\ast)= 1-\nu(x_\ast)$ and $j_+(x_\ast)=0\pmod{\Z}$.
Note that both functions are injective and cyclic-order-preserving. Let $J_x\subset (0, 1)$ be the open interval $(j_-(x), j_+(x))$. By construction, its length is $\lvert J_x\rvert=\nu(x)$. It follows that $\mathcal{O}:=\bigcup_x J_x$ has measure 1 (in particular, it is dense), and it is straightforward to check that the family $(J_x)$ satisfies conditions \ref{t-i-1} and \ref{t-i-2} in the statement. Let $\mathcal{C}=\mathbb{S}^1\setminus\mathcal{O}$. We define the action $\rho$ on the set $\mathcal{O}$, by setting $\rho(g)|_{J_x}=\phi^{J_x}_{Jg(x)}$ for every $g\in G$ and $x\in X$. By condition \ref{i3-eqfam} this is a well-defined, cyclic-order-preserving action on $\mathcal{O}$ by $C^1$ diffeomorphisms, and it extends by continuity to an action on $\mathbb{S}^1$ by homeomorphisms, still denoted $\rho$, which satisfies \ref{t-i-3}. To check that the extension is actually $C^1$, it is enough to check that for every sequence $(\xi_n)\subset \mathcal{O}$ converging to a point of $\zeta\in \mathcal{C}$ we have $D\rho(g)(\xi_n)\to 1$. Let $x_n$ be such that $\xi_n\in J_{x_n}$. By an extraction argument, we can assume that either $(x_n)$ is eventually constant, or that it escapes from every finite subset of $X$. In the first case, convergence of the derivative follows from \ref{i2-eqfam}, and in the second it follows from \ref{i1-eqfam} and from the fact that $\nu$ is a moderate $\ell^1$-function.  

We finally justify the last sentence of the theorem. Fix a finite subset $S\subset G$ and $\varepsilon>0$.  Considering the modified version of $\nu$ as described in Remark \ref{r-C1-close}, with $\delta$ satisfying $\delta(2+\delta)\le \varepsilon$. Conditions \ref{i1-eqfam} and \ref{i2-eqfam} give
    \[
    |D\rho(s)(x)-1|\le \lvert\nu(sx)^2/\nu(x)^2-1\rvert\le \delta(2+\delta)\le \varepsilon.
    \]
    for any $x\in \mathbb{S}^1$. 
\end{proof}

\begin{rem}\label{r-C1-close-2}
    The last sentence in the previous result was suggested to us by Andrés Navas. An equivalent formulation is that the action $\rho$ can be (topologically) conjugate to be made arbitrarily close, in the $C^1$ topology, to a homomorphism to the group of rotations (which is necessarily trivial if $\rho$ has a global fixed point, e.g.\ if we start from a linear-order-preserving action). This has to be compared with the following results of Navas and Parkhe. First, \cite[Théorème A]{Navas2014}, states that any action of the circle of a finitely generated group of subexponential growth can be $C^0$ conjugated to a bi-Lipschitz action such that the Lipschitz constants of the images of an arbitrary finite subset can be chosen arbitrarily close to 1. Second, \cite[Théorème B]{Navas2014} and \cite[Corollary 1.6]{Parkhe2016} (proved independently), state that any $C^1$ action of a finitely generated nilpotent group can be $C^0$ conjugated to a $C^1$ action such that the derivatives of the images of an arbitrary finite subset can be chosen arbitrarily close to 1. The embedding of the Grigorchuk--Machì group in $\Diff_+^1([0, 1])$ constructed by Navas in  \cite{Navas2008GAFA} also has this property.

    Let us also mention that it is an open problem to characterise which $C^1$ actions on the circle can be (topologically) conjugate  arbitrarily close to rotations in the $C^1$ topology. As stated in \cite{Navas2014} and with a slight variation in \cite[Question 4]{BonattiFarinelli}, it is not known if the unique obstruction is the existence of a so-called \emph{resilient orbit} (without entering into the somehow technical definition, let us only comment that any resilient orbit has exponential growth).
\end{rem}

Let $M$ be a compact connected one-manifold, and let $G$ be a group. 
For two representations $\hat\rho, \rho$ in $\Hom(G,\Homeo(M))$, we say $\hat\rho$ is a \emph{blow-up}  of $\rho$  if there exists a continuous,  surjective map $h\co M\to M$ such that
$h^{-1}(J)$ is connected for each connected set $J\sse M$,
and such that each $g\in G$ satisfies
\[
h\circ\hat\rho(g)=\rho(g)\circ h.\]
If $\hat\rho$ is a blow-up of $\rho$, we also say that $\hat\rho$ and $\rho$ are \emph{semi-conjugate} ~\cite{KH1995}. In full generality, semi-conjugacy\footnote{There are some mildly different definitions in the literature. Note however that we do not really need  the general definition of semi-conjugacy in this paper, so that the reader can stick to the special case of blow-ups, which is included in all definitions. } is defined as the equivalence relation between actions on a connected one-manifold \emph{generated by} the pairs $(\hat{\rho}, \rho)$ such that $\hat{\rho}$ is a blow up of $\rho$. (See~\cite{BFH2016} and~\cite[Appendix A]{KKM2019} for  equivalent and more symmetric formulations of semi-conjugacy.)

Corollary \ref{c-C1-growth} is a consequence of the following. 
\begin{cor} \label{c-C1-precise}
If $G$ is a countable group, and if $\rho$ is a topological action of $G$ on a compact connected one-manifold $M$ with subexponential growth of orbits, then there exists a $C^1$ action $\hat\rho$ that is a blow-up of $\rho$.\end{cor}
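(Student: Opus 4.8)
The plan is to deduce Corollary \ref{c-C1-precise} from Theorem \ref{t-C1-precise} by extracting a suitable countable invariant set from the topological action $\rho$ and feeding it into the realisation theorem. First I would recall that for an action on a compact connected one-manifold $M$ (an interval or a circle), the natural combinatorial skeleton is a countable $G$-invariant set equipped with an induced (linear or circular) order. The goal is to produce a countable $G$-set $(X,c)$ with subexponential growth of orbits, together with a blow-up map that intertwines the $C^1$ model built from $X$ with the original $\rho$.

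First I would choose a countable dense $G$-invariant subset $X \subset M$: take any countable dense subset of $M$ and saturate it under the $G$-action. Since $G$ is countable this keeps $X$ countable, and density is preserved. The action $\rho$ induces on $X$ an action preserving the cyclic order $c$ inherited from $M=\mathbb{S}^1$ (or the linear order if $M=[0,1]$, which we promote to a cyclic order as in the discussion preceding Theorem \ref{t-C1-precise}). The hypothesis that $\rho$ has subexponential growth of orbits passes verbatim to $X$, since the orbit-growth of a point of $X$ under a finite $S\subset G$ is exactly $|S^n x|$, computed identically in $M$ and in $X$. Thus $(X,c)$ is a cyclically ordered $G$-set with subexponential growth of orbits, and Theorem \ref{t-C1-precise} applies.

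Next I would invoke Theorem \ref{t-C1-precise} to obtain a $C^1$ action $\hat\rho$ on $\mathbb{S}^1$ (or $[0,1]$), a family of pairwise disjoint open intervals $(J_x)_{x\in X}$ whose union is dense, and a cyclic-order-preserving injection $x\mapsto$ (endpoint of $J_x$) satisfying $\hat\rho(g)(J_x)=J_{gx}$. The remaining task is to build the semi-conjugating map $h$. The idea is that $\hat\rho$ is exactly the blow-up of $\rho$ obtained by replacing each point $x\in X$ with the interval $J_x$. Concretely I would define $h\colon \mathbb{S}^1\to M$ by collapsing each closed interval $\overline{J_x}$ to the point $x\in X\subset M$, and extending the resulting map over the complement (the exceptional Cantor-type set $\mathcal{C}=\mathbb{S}^1\setminus\bigcup_x J_x$) using the order-preserving bijection between $X$ and its image. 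Because the endpoints of the $J_x$ and the points of $X$ appear in the same cyclic order, and because both $\bigcup_x J_x$ and $X$ are dense, this collapsing map extends to a continuous surjection $h\colon \mathbb{S}^1\to M$. The preimage $h^{-1}(J)$ of a connected set is connected because collapsing intervals to points cannot disconnect, and one checks $h\circ\hat\rho(g)=\rho(g)\circ h$ on the dense set $\bigcup_x J_x$ (where $\hat\rho(g)$ sends $J_x$ to $J_{gx}$ while $\rho(g)$ sends $x$ to $gx$), hence everywhere by continuity. This exhibits $\hat\rho$ as a blow-up of $\rho$.

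The main obstacle I expect is the careful construction and continuity verification of the collapsing map $h$, in particular checking that it is well-defined and continuous across the exceptional set $\mathcal{C}$ rather than only on the intervals $J_x$. One must verify that two sequences in $\bigcup_x J_x$ converging to the same point of $\mathbb{S}^1$ map under $h$ to the same point of $M$, which uses the order-compatibility of the embedding together with density of $X$ in $M$; the fact that the total length $\sum_x |J_x|=\sum_x \nu(x)=1$ fills up $\mathbb{S}^1$ ensures $\mathcal{C}$ has empty interior and that $h$ does not collapse any arc of $M$ to a point, so that $h$ is genuinely a (monotone, degree-one) semi-conjugacy. The only subtlety in the interval case is handling the two boundary points of $[0,1]$, which are fixed and cause no difficulty. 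Everything else is a routine transfer of the order combinatorics through the already-established Theorem \ref{t-C1-precise}.
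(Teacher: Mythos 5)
Your proposal is correct and follows essentially the same route as the paper: choose a countable dense $\rho(G)$-invariant set $X\subset M$ with the induced cyclic order (the growth hypothesis transfers tautologically), apply Theorem \ref{t-C1-precise}, and collapse each interval $J_x$ to the point $x$ to obtain the semi-conjugating map $h$. The only cosmetic difference is that the paper dispatches the interval case by identifying the endpoints of $[0,1]$ to reduce to the circle, whereas you treat the boundary points directly; both are fine, and your extra care in verifying continuity of $h$ across the exceptional set just makes explicit what the paper leaves as a one-line assertion.
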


\begin{proof}
We only consider the case $M=\mathbb{S}^1$,
since  the case of an interval would easily follow by identifying the extreme points.
Let $X\subset \mathbb{S}^1$ be a countable, dense, $\rho(G)$-invariant subset. We apply Theorem \ref{t-C1-precise} to the action of $G$ on $X$, endowed with the circular order induced  by the inclusion $X\subset \mathbb{S}^1$. Let $\widehat{\rho}$ be the resulting $C^1$ action, and $(J_x)_{x\in X}$ be the associated family of intervals. There is a unique continuous $G$-equivariant map  $h\colon \mathbb{S}^1\to \mathbb{S}^1$ that on each interval $J_x$ takes the constant value $x$, which shows that $\widehat{\rho}$ is a blow-up of $\rho$.  \qedhere
\end{proof}

\section{On the modulus of continuity of derivatives} \label{s-modulus}
It is natural to ask whether the previous results can be improved to obtain actions with slightly better regularity than $C^1$. As discussed in the introduction,  regularity $C^{1+\alpha}$ for some $\alpha>0$ cannot always be achieved in the generality considered here (for instance this is impossible for certain actions of infinitely generated abelian groups, as follows from \cite{DKN2007}, and for actions of finitely generated groups of intermediate growth  \cite{Navas2008GAFA}). However, as should not be surprising, in the setting of Theorem \ref{t-C1-precise}  a quantitative control of the growth of orbits of the action on $X$ translates into a control for the modulus of continuity of the derivatives of the resulting $C^1$ action on $\mathbb{S}^1$. Here we briefly review our arguments to explain how to obtain such explicit bounds. This relies on finer regularity properties of Yoccoz' equivariant family, as analysed in \cite[\S 1.3]{Navas2008GAFA} (see also \cite{Jorquera2012}). 

Recall that a \emph{modulus (of continuity)} is a homeomorphism
$\sigma\co [0,\infty)\to[0,\infty)$.
A function  $f$ between metric spaces is in the class $C^{\sigma}$ if
\[
\sup_{x\ne y} |d(f(x),f(y))|/\sigma(d(x,y))\]
is finite. The supremum in the previous expression is then called the $C^\sigma$-norm of $f$. For $M=\mathbb{S}^1$ or $[0,1]$, we denote by $\Diff_+^{1,\sigma}(M)$ the set of all orientation--preserving $C^1$ diffeomorphisms of $M$ whose derivatives are in the class $C^\sigma$. It is a group if the modulus $\sigma$ is not too wild, for example if $t\mapsto \sigma(t)/t$ is decreasing.
\begin{prop}[Quantitative version of Theorem \ref{t-C1-precise}] \label{p-quantitative}
    Retain all assumptions of Theorem \ref{t-C1-precise}, and suppose moreover that  $F\colon \N\to [1, +\infty)$ is a non-decreasing function such that $\lim F(n+1)/F(n)=1$, and that for every finite subset $S\subset G$ and every $x\in X$, we have 
    \begin{equation}
\label{i-summable} 
  \sum_n \frac{|S^nx|}{F(n)}<+\infty.
    \end{equation}
 Assume further that $\sigma$ is a modulus of continuity such that $t \mapsto \sigma(t)/t$ is decreasing, and that
\begin{equation} \label{i-F-omega} 
\limsup_{n\to \infty}  \left\lvert \frac{F(n+1)}{F(n)}-1\right\rvert \cdot \frac{1}{\sigma(F(n)^{-1})}<+\infty.
\end{equation}  
    Then the action $\rho$ in the conclusion of Theorem \ref{t-C1-precise} can be chosen to take values in $\Diff_+^{1, \sigma}(\mathbb{S}^1)$.
  
\end{prop}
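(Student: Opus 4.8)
The plan is to re-run the proof of Theorem~\ref{t-C1-precise} essentially verbatim, but with a moderate $\ell^1$-function $\nu$ manufactured directly from the prescribed $F$, and then to upgrade the $C^1$ verification into a quantitative $C^\sigma$ estimate. First I would revisit Lemma~\ref{l-wobbling}, running its diagonal argument while keeping track of the given $F$ rather than merely of subexponentiality: I would construct a distance $d$ on $X$ of bounded displacement, say $d(x,gx)\le c_g<\infty$, whose balls $B(n):=B_{(X,d)}(x_0;n)$ satisfy $\sum_n |B(n)|/F(n)<\infty$. This is exactly where hypothesis~\eqref{i-summable} enters: for each fixed finite $S$ the series $\sum_n |S^nx_0|/F(n)$ converges, and since $|B(n)|\le |S(\mu(n))^n x_0|$ for a sequence $\mu(n)\to\infty$ that we are free to let grow as slowly as we please, a diagonal choice of $\mu$ (and of the auxiliary lengths $\ell_i$ in the non-transitive case) forces convergence of $\sum_n|B(n)|/F(n)$. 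Setting $\nu(x)=F(n)^{-1}$ for $x\in\partial B(n)$ then makes $\nu\in\ell^1$, while $F(n+1)/F(n)\to 1$ together with bounded displacement makes $\nu$ moderate, exactly as in the proof of Proposition~\ref{p-moderate}. Feeding this $\nu$ into the construction of Theorem~\ref{t-C1-precise} produces the action $\rho$ and the intervals $(J_x)$ with $|J_x|=\nu(x)$, already satisfying \ref{t-i-1}--\ref{t-i-3}.

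It remains to show $\rho(g)\in\Diff_+^{1,\sigma}(\mathbb{S}^1)$ for every $g$, i.e.\ that $f:=D\rho(g)$ has finite $C^\sigma$-seminorm. The second step is a gluing reduction: the $C^1$ extension of Theorem~\ref{t-C1-precise} has $f\equiv 1$ on the complement $\mathcal{C}=\mathbb{S}^1\setminus\bigcup_xJ_x$, and $f\equiv 1$ at the endpoints of every $J_x$ by \ref{i2-eqfam}. Hence any pair $\xi,\eta$ lying in distinct intervals (or with one in $\mathcal{C}$) can be compared through intermediate endpoints where $f=1$ and which lie between $\xi$ and $\eta$; since $\sigma$ is non-decreasing this gives
\[ [f]_{C^\sigma(\mathbb{S}^1)}\le 2\sup_{x\in X}\,[f|_{\overline{J_x}}]_{C^\sigma}, \]
so everything reduces to a single uniform bound on the $C^\sigma$-seminorm of $D\phi^{J_x}_{J_{gx}}$ over $\overline{J_x}$.

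The third step is the per-interval estimate, and here the finer regularity of Yoccoz' family \cite[\S1.3]{Navas2008GAFA} is the crucial input. For $I,J$ with $|J|/|I|$ bounded I would use, besides \ref{i1-eqfam}, the scale-aware second-order bound $\|D^2\phi^I_J\|_{\infty,I}\le C\,\epsilon/|I|$ with $\epsilon:=\big||J|^2/|I|^2-1\big|$. Then for $x,y\in\overline I$ with $r=|x-y|\le|I|$ one has $|D\phi^I_J(x)-D\phi^I_J(y)|\le C\epsilon\, r/|I|$, and dividing by $\sigma(r)$ and using that $r\mapsto r/\sigma(r)$ is increasing (equivalently $\sigma(t)/t$ decreasing) to replace $r$ by $|I|$ yields
\[ [D\phi^I_J]_{C^\sigma(\overline I)}\le \frac{C\,\big||J|^2/|I|^2-1\big|}{\sigma(|I|)}. \]
Specialising to $I=J_x$, $J=J_{gx}$, so $|I|=\nu(x)=F(n)^{-1}$ with $n=d(x_0,x)$ and $|J|=\nu(gx)=F(n_1)^{-1}$ with $|n_1-n|\le c_g$, a telescoping estimate bounds the numerator $\big|F(n)^2/F(n_1)^2-1\big|$ by $C'\max_{|m-n|\le c_g}|F(m+1)/F(m)-1|$. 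Since the arguments $F(m)^{-1}$ stay comparable for $|m-n|\le c_g$ and $\sigma(t)/t$ is decreasing, one checks $\sigma(F(m)^{-1})\asymp\sigma(F(n)^{-1})$, so the whole quantity is controlled by
\[ C''\,\frac{|F(n+1)/F(n)-1|}{\sigma(F(n)^{-1})}, \]
whose $\limsup$ is finite precisely by hypothesis~\eqref{i-F-omega}. The finitely many intervals with $n$ inside a fixed finite ball contribute finite $C^\sigma$-seminorms individually, so taking the maximum gives the desired uniform bound and hence $\rho(g)\in\Diff_+^{1,\sigma}(\mathbb{S}^1)$ for every $g$.

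I expect the main obstacle to be the input of the third step, namely extracting from Yoccoz' family the scale-aware estimate $\|D^2\phi^I_J\|_{\infty}\le C\epsilon/|I|$ (rather than the mere $C^1$-closeness \ref{i1-eqfam}); once this is available, the passage through $\sigma(t)/t$ decreasing and the telescoping comparison of consecutive $F$-ratios are routine, and the only other point requiring care is the diagonal bookkeeping in the first step that converts \eqref{i-summable} into summability of $\sum_n|B(n)|/F(n)$.
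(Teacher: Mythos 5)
Your proposal is correct and follows the paper's proof essentially step for step: the same sharpening of the diagonal extraction in Lemma~\ref{l-wobbling} to obtain a bounded-displacement metric with $\sum_n |B(n)|/F(n)<\infty$, the same choice $\nu\equiv F(n)^{-1}$ on spheres (noting, as the paper does, that the condition $F(n)\ge n^2|B(n)|$ in Proposition~\ref{p-moderate} served only to make $\sum_n F(n)^{-1}|B(n)|$ converge), and the same telescoping of consecutive $F$-ratios combined with a per-interval $C^\sigma$ bound for Yoccoz's family plus a gluing step across the intervals $(J_x)$. The only difference is one of citation versus re-derivation: where you sketch proofs of the two analytic inputs --- the scale-aware estimate $[D\phi^I_J]_{C^\sigma}\le C\,\lvert |J|/|I|-1\rvert/\sigma(|I|)$ for $|I|,|J|$ comparable, and the patching of seminorms using that $D\rho(g)=1$ at interval endpoints and on the Cantor set --- the paper simply invokes these as \cite[Lemmas 1.10 and 1.12]{Navas2008GAFA}.
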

\begin{proof}[Proof sketch.] 
Let $X$ be the $G$-set in the statement, and choose a basepoint $x_0\in X$. First, let us look back at the proof of Lemma \ref{l-wobbling}. A careful look at the proof shows that if $f(n)$ is any function such that 
\[\lim_n f(n)/\lvert S^nx\rvert=+\infty\]
holds for every finite $S\subset G$ and every $x\in X$, then the distance $d$ on $X$ in the conclusion of Lemma \ref{l-wobbling} can be chosen such that 
\[\lim_n \left(f(n)/|B_{(X, d)}(x_0; n)|\right)=+\infty.\]
Indeed all the diagonal extraction arguments in the proof can be improved to ensure this. In our situation, thanks to assumption \eqref{i-summable}, there exists such an $f$ with the additional property that $\sum_n F(n)^{-1}f(n)<+\infty$, and so the obtained distance $d$ will satisfy  $\sum F(n)^{-1}|B_{(X, d)}(x_0; n)|<+\infty$.

Now consider the proof of Proposition \ref{p-moderate}. From the previous paragraph, it follows that in order to construct the moderate $\ell^1$-function $\nu$,  we can choose the function $F(n)$ appearing in the proof as the function $F(n)$ given here.  (Note that the assumption $F(n)\ge n^2|B(n)|$ made in the proof was used only to ensure the convergence of $\sum_n F(n)^{-1}|B(n)|$). With this choice, thanks to assumption \eqref{i-F-omega}, the function $\nu\in \ell^1(X)$ constructed in the proof Proposition \ref{p-moderate} will have the property that for every $g\in G$, we have
\begin{equation} M_g:=\sup_{x\in X}\left(\left\lvert\frac{\nu(gx)}{\nu(x)}-1\right\rvert \cdot \frac{1}{\sigma(\nu(x))}\right)<+\infty.\label{e-omega-moderate} \end{equation}
With this estimate in hand, consider the proof of Theorem \ref{t-C1-growth}. Let $(J_x)_{x\in X}$ be the family of intervals constructed there, and recall that $J_x$ has length $\nu(J_x)$. We take the family of $\{\phi^I_J\}$ to be Yoccoz' family. By \cite[Lemma 1.10]{Navas2008GAFA}, the derivative  $D\phi^I_J$ has $C^\sigma$-norm bounded above by
\[6\pi \left(\left \lvert \frac{|J|}{|I|}-1\right \rvert\cdot \frac{1}{\sigma(|I|)}\right)\]
whenever $1/2\leq |I|/|J|\leq 2$. Hence \eqref{e-omega-moderate} implies that $D\phi^{J_{gx}}_{J_x}$ has  $C^\sigma$-norm bounded by $6\pi M_g$ for all but finitely many $x$. Hence \cite[Lemma 1.12]{Navas2008GAFA}  implies that $D\rho(g)$ has finite $C^\sigma$-norm.\qedhere
\end{proof}

We now explain how Proposition \ref{p-quantitative} specializes in a few concrete examples. 
\begin{exmp} \label{e-alpha-holder}
    Suppose that the action of $G$ on $X$ has polynomial growth of orbits with uniformly bounded degree, namely there exists some $d$ such that for every $x\in X$ and $S\subset G$ finite, we have $\sup_n |S^nx|/n^d<+\infty.$ Then one can choose an $\alpha$-H\"older modulus of continuity $\sigma(s)=s^\alpha$. Namely, in Proposition \ref{p-quantitative}, we can take $F(n)=n^c$ with $c>d+1$, and taking $\alpha<\frac{1}{d+1}$ guarantees that \eqref{i-F-omega} is satisfied. 
    
    \end{exmp}
    
    \begin{rem} \label{r-alpha-holder}
        
   In the previous example, the conclusion  can be slightly improved by observing that it is enough to ensure convergence of the series \eqref{i-summable} where balls are replaced by \emph{spheres} (as apparent from the proof of Proposition \ref{p-moderate}). In particular for actions of $\Z^d$ one can choose any $\alpha<\frac{1}{d}$, as was shown in   \cite{DKN2007}. For a general finitely generated nilpotent group of degree of  growth $d$, the same bound $\alpha<\frac{1}{d}$ works and was claimed by Parkhe \cite{Parkhe2016}, but the proof contains a mistake, as it relies on the unproven assumption that spheres have growth $\sim n^{d-1}$ (it is not known whether this is true); however, this can be easily fixed using the doubling property of balls, as kindly explained to us by Maximiliano Escayola and Victor Kleptsyn.

    \end{rem}
%

\begin{exmp}
    Suppose now that the action of $G$ on $X$ has polynomial growth of orbits with non-uniformly bounded degree, namely for every $x\in X$ and finite $S\subset G$, there exists some $d= d(x, S)$ such that  we have $\sup_n |S^nx|/n^d<+\infty$ (this is the case, for instance, if the group $G$ is free abelian of infinite rank, and more generally if it is locally nilpotent but not finitely generated). Then one can choose $\sigma$ to be any given \emph{non-H\"older} modulus of continuity, i.e.\ any function  such that $\lim_{t\to0+}t^\alpha/\sigma(t)=0$ for every $\alpha\in(0, 1)$ (with $\sigma(t)/t$  decreasing). Indeed, given such a $\sigma$, we can apply Proposition \ref{p-quantitative} with $F$ of the form $F(n)=n^{\beta(n)}$, where $\beta\colon [1,\infty)\to [1,\infty)$ is a suitable non-decreasing function.  Condition \eqref{i-summable} is satisfied as soon as $\lim_{x\to \infty}\beta(x)=+\infty$, and $\frac{F(n+1)}{F(n)}$ tends to 1 as soon as $\beta$ is concave and does not increase too fast (for instance $\beta(x)\leq \sqrt{x}$ suffices). To check \eqref{i-F-omega}, note that if $\beta(x)$ grows sufficiently slowly, for every large enough $n$ we have
    \[\left \lvert\frac{F(n+1)}{F(n)}-1\right \rvert\frac{1}{\sigma(F(n)^{-1})}=\left\lvert\left(1+\frac{1}{n}\right )^{\beta(n)}-1\right\rvert\frac{1}{\sigma(n^{-\beta(n)})}\leq \frac{2\beta(n)}{n\sigma(n^{-\beta(n)})}.\]
Here we used the elementary inequality $(1+x)^c\leq 1+2c x$, holding for every $c\geq 0$ and every $x$ in some interval of the form $(0, \varepsilon_c)$, and assumed that $\beta(n)$  grows slowly enough so that $1/n\leq \varepsilon_{\beta(n)}$ for every large enough $n$. Since $\sigma$ is non-H\"older, for any fixed $\beta>0$ we have $\lim_n \frac{\beta}{n\sigma(n^{-\beta})}=0$  (as shown by the change of variable $s=n^{-\beta}$).  Hence if we choose $\beta(n)$ to grow slowly enough, the right-hand side in the previous expression tends to 0.
    \end{exmp}

 \begin{exmp}

 
     Suppose that the action of $G$ on $X$ has stretched exponential growth of degree at most $\alpha<1$, that is $\sup_n |S^n x|/\exp(n^\alpha)<\infty$ for every $x \in X$ and $S\subset G$ finite. Then the action of $G$ in the conclusion of Theorem \ref{t-C1-precise} can be chosen to have regularity $C^{1,\sigma}$ with modulus of continuity given by $\sigma(s) = \lvert \log s\rvert ^{1-1/\alpha}$ for small $s$. Indeed, in Proposition \ref{p-quantitative}, we can take $F(n)=n^2 \exp(n^\alpha)$. Condition~\eqref{i-summable} is obvious, Condition~\eqref{i-F-omega} holds because
     \[\frac{F(n+1)}{F(n)} - 1 = O(n^{\alpha -1}) = O(\sigma(F(n)^{-1})),\] and $t\mapsto \sigma(t)/t$
     is decreasing on $(0,\exp(1-1/\alpha))$.
 \end{exmp}

\bibliographystyle{amsplain}
\bibliography{biblio_michele}
\end{document}